\documentclass[letterpaper,11pt]{article}

\usepackage[utf8]{inputenc}
\usepackage[english]{babel}

\usepackage{latexsym}
\usepackage{amssymb}
\usepackage{amsthm}
\usepackage{mathrsfs}
\usepackage{amstext} 
\usepackage{graphicx}
\usepackage{subcaption}
\usepackage{amsfonts}
\usepackage{booktabs}
\usepackage{wrapfig}
\usepackage{sidecap} 
\usepackage{latexsym}
\usepackage{pdfpages}
\usepackage{epic}
\usepackage{fullpage}
\usepackage{color}
\usepackage{curves}
\usepackage{setspace}
\usepackage{amsmath}
\numberwithin{equation}{section}
\usepackage{anysize}
\usepackage{rotating}
\usepackage{enumerate} 
\usepackage{hyperref}
\usepackage{bbm}

\usepackage{graphicx}

\usepackage{xcolor}
\usepackage{tikz}
\usetikzlibrary{arrows.meta,positioning,shapes.geometric,shapes.misc}

\newtheorem{theorem}{Theorem}[section] 
\newtheorem{definition}[theorem]{Definition}

\newtheorem{proposition}[theorem]{Proposition}
\newtheorem{lemma}[theorem]{Lemma}
\newtheorem{remark}[theorem]{Remark}
\newtheorem{algorithm}[theorem]{Algorithm}
\def\R{{\mathbb R}}

\renewcommand{\leq}{\leqslant}
\renewcommand{\geq}{\geqslant}
\numberwithin{equation}{section}

\title{A null controllability data assimilation for the bulk--surface heat equation with dynamic boundary conditions}
\date{}

\author{
	Javier Ram\'irez-Ganga\thanks{Centro de Modelamiento Matem\'atico (CMM) IRL 2807 CNRS-UChile and Departamento de Ingenier\'ia Matem\'atica (DIM), Universidad de Chile, Beauchef 851, Casilla 170-3, Correo 3, Santiago, Chile, e-mail: jramirez@dim.uchile.cl.}
	}

\begin{document}
\maketitle
\begin{abstract}
We address the inverse problem of reconstructing the state at a final time $T_0$ of a parabolic equation with dynamic boundary conditions of Wentzell type, from a distributed interior observation on a subdomain $\omega \subset \Omega$. Our approach combines (i) a Carleman-type observability inequality for the bulk--surface system, with (ii) a Tikhonov-regularized optimal-control reformulation of the data assimilation problem, and (iii) a fully implementable spatio-temporal discretization based on a Lie splitting
that decouples the bulk and surface dynamics. We prove an exact reconstruction theorem for the continuous problem, derive a stability inequality, and analyze a penalized discrete scheme. We then propose an adaptive post-processing step that optimally combines the regularized reconstruction with the observation itself; under noisy data, the resulting estimator automatically selects between full post-processing and the raw reconstruction without prior knowledge of the noise level. Extensive numerical experiments on a two-dimensional Wentzell heat equation validate the method. A semilinear extension to the Allen--Cahn nonlinearity is treated via a Picard outer loop with Schauder fixed-point convergence; two complementary experiments reveal an intrinsic observability--nonlinearity trade-off in the bulk--surface setting.
\end{abstract}

\medskip
\noindent\textbf{Keywords:} Bulk-surface PDE, Wentzell boundary conditions, data assimilation, null controllability, Lie splitting, Allen-Cahn equation.

\medskip

\noindent {\bf MSC Classifications (2020):} 35R30, 35K20, 65M60, 93B05.

\section{Introduction}

The reconstruction of the state of a parabolic equation from a distributed observation has been studied intensively over the past two decades, both for applications in geophysics and as a benchmark of inverse-problem techniques \cite{talagrand1987variational,blum1998variational,MR1642564,MR2491591,MR2804643,MR3814604}. For models posed in a bulk--surface setting---where a parabolic equation in a domain $\Omega \subset \R^d$ is coupled with a dynamic boundary equation on $\Gamma = \partial\Omega$ (so-called Wentzell boundary conditions, \cite{MR2215623,MR1890879})---a quantitative observability theory has only become available more recently, in particular through the global Carleman estimate of Maniar, Meyries and Schnaubelt \cite{MR3669656}. The application of this estimate to data assimilation problems remains, to the best of our knowledge, open.

Bulk--surface parabolic equations with dynamic (Wentzell-type) boundary  conditions arise in several distinct applied contexts. In geophysics and  oceanography, they model bulk- surface heat exchange between the ocean  interior and the air--sea interface; for instance, the linearized  quasi-geostrophic model of Garc\'ia, Osses and Puel \cite{MR2804643} is  exactly of this type, and the present framework would extend their data assimilation method to settings where the surface evolution is not a passive  trace but is governed by its own PDE. Furthermore, in cell and membrane biology, these equations describe phase-field and reaction--diffusion  models on cell membranes coupled with bulk cytoplasmic dynamics  \cite{MR3047936}, particularly in scenarios where observation is naturally  intracellular (the bulk domain) while the surface state (such as the membrane concentration of a signalling molecule) is inaccessible and must be reconstructed. They also appear in materials science, specifically in phase separation and coarsening within thin films with surface confinement. In this context, Allen--Cahn type surface nonlinearities encode wetting transitions, and the semilinear extension developed in Section \ref{sec:semilinear} is directly motivated by this application. Finally, in heat transfer engineering, this framework applies to conduction in solids with convective or radiative surface boundary conditions in the dynamic regime, where the surface heat capacity is non-negligible \cite{MR2215623}.

In this paper we propose, analyze, and implement a method that combines the Maniar--Meyries--Schnaubelt Carleman estimate with the Tikhonov-regularized control reformulation pioneered by Puel \cite{MR2491591}. The main objectives of this work are: \begin{enumerate}
\item A \emph{representation formula} (Theorem~\ref{thm:repr}) which expresses the value of the bulk state at $T_0$ in terms of the observation, of the source, and of a ``null control'' function $g^*(\Phi_0)$ that parametrizes a backward adjoint problem with prescribed terminal datum $\Phi_0$.

\item A \emph{penalized scheme} (Theorem~\ref{thm:penalized}) which selects the control as the minimizer of an $\alpha$-regularized cost functional, well-posed at the discrete level after a Galerkin reduction.

\item A \emph{Lie-splitting bulk--surface discretization}, providing an efficient and structure-preserving way to assemble the discrete controllability Gramian $\boldsymbol{\mathcal C}_h$ underlying the reconstruction.

\item An \emph{adaptive post-processing step} that automatically selects, from the data, whether to trust the observation inside $\omega$ or the regularized reconstruction outside; the rule requires no prior calibration of the noise level.

\item An extensive \emph{numerical study} revealing a non-monotone error/radius profile, identifying the structural role of multi-disk observatories, and demonstrating that the adaptive scheme stays near a $\sim 9\,\%$ raw ceiling for moderate noise and below $\sim 16\,\%$ even at $100\,\%$ noise.
\end{enumerate}

The exact null-controllability approach to data assimilation goes back to Puel \cite{MR2491591}: rather than minimizing the mismatch between the observation and a forward simulation (the variational 4D-Var problem, classically used in atmospheric and oceanic data assimilation \cite{le1986variational,courtier1987variational,daley1993atmospheric} but ill-posed when the initial data is unknown), one parametrizes the unknown final state by a backward control and uses the Carleman estimate to obtain a stability bound. Garc\'ia, Osses and Puel \cite{MR2804643} applied the method to the quasi-geostrophic equations of oceanography, where the regularization $\alpha \to 0$ is shown to converge to the exact reconstruction limit. The underlying Carleman machinery for parabolic equations was systematically developed in \cite{MR1406566, MR1987865, MR1362555, MR3041662} and extended to nonlinear and discontinuous settings in \cite{MR1932966,MR2103189}. Our contribution adapts this circle of ideas to the bulk--surface case with Wentzell boundary conditions, building on the Carleman estimate of Maniar, Meyries and Schnaubelt \cite{MR3669656}.

On the discretization side, finite-element methods for bulk--surface problems are by now well understood \cite{MR3047936, MR2050138,MR2743037}. Lie-type splitting methods specifically for dynamic boundary conditions were studied recently by Altmann, Kov\'acs and Zimmer \cite{MR4568436}, who proved convergence of order $\tau$ in the energy norm; the geometric integration framework of \cite{MR2221614} provides further context for splitting schemes. Our implementation follows \cite{MR4568436} but adds the assembly of the discrete controllability Gramian $\boldsymbol{\mathcal C}_h$ as a Gram matrix, symmetric positive semi-definite by construction; see Section \ref{sec:discretization}. From the inverse-problems perspective, our Tikhonov penalization fits the classical regularization theory developed in \cite{MR1408680,MR2459012,MR766231}.

\section{Problem formulation and main results} \label{sec:problem}

\subsection{The continuous problem and functional setting}\label{ssec:cont}

Let $\Omega \subset \R^d$, $d \in \{2,3\}$, be a bounded domain with smooth boundary $\Gamma = \partial \Omega$, and let $T_0 > 0$. We consider the linear bulk--surface heat equation
\begin{equation}
\label{eq:state}
\begin{cases}
\partial_t u - \Delta u = f & \text{in } \Omega \times (0, T_0), \\
\beta\, \partial_t p + \partial_n u - \Delta_{\Gamma} p = f_\Gamma
& \text{on } \Gamma \times (0, T_0), \\
u|_\Gamma = p & \text{on } \Gamma \times (0, T_0),
\end{cases}
\end{equation}
where $\Delta_\Gamma$ is the Laplace--Beltrami operator on $\Gamma$, $\partial_n$ is the outward normal derivative, and $\beta > 0$ is a fixed surface-diffusion parameter. The trace coupling $u|_\Gamma = p$ is enforced strongly.

The natural energy space for \eqref{eq:state} is
\begin{equation}
\label{eq:Hspace}
\mathcal{H} := \bigl\{ (v, q) \in L^2(\Omega) \times L^2(\Gamma) :
v|_\Gamma = q \bigr\},
\end{equation}
endowed with the $\beta$-weighted inner product
\begin{equation}
\label{eq:Hinner}
\bigl( (u, p), (v, q) \bigr)_{\mathcal{H}}
:= \int_\Omega u v\, dx + \beta \int_\Gamma p q\, dS.
\end{equation}
We use the abbreviation $U := (u, p) \in \mathcal{H}$ when discussing the coupled state; the source is similarly written $F := (f, f_\Gamma)$. Well-posedness of \eqref{eq:state} in $C([0,T_0]; \mathcal{H})$ has been established under mild assumptions (\cite{MR3669656,MR4568436}).

\paragraph{The data assimilation problem:} We assume the initial datum $U_0 = (u_0, p_0) \in \mathcal{H}$ is \emph{unknown}, and that the bulk state is measured on a subdomain $\omega \subseteq \Omega$:
\begin{equation}
\label{eq:obs}
h(t,x) := u(t,x)|_{\omega \times (0,T_0)},
\qquad h \in L^2(0,T_0; L^2(\omega)).
\end{equation}
The goal is to reconstruct the state at the final time $T_0$, i.e.,
$U(T_0) = (u(T_0), p(T_0))$, without knowledge of $U_0$.

For the reconstruction of \(U(T_0)\) we will introduce a control problem for the backward adjoint system: Given a \textit{control} $g \in L^2(0,T_0; L^2(\omega))$ and a terminal datum
$\Phi_0 = (\phi_0, \phi_0|_\Gamma) \in \mathcal{H}$, let us consider the following equation:
\begin{equation}
\label{eq:adjoint}
\begin{cases}
-\partial_t \xi - \Delta \xi = - g\, \mathbf{1}_\omega
& \text{in } \Omega \times (0, T_0), \\
-\beta\, \partial_t \pi + \partial_n \xi - \Delta_\Gamma \pi = 0
& \text{on } \Gamma \times (0, T_0), \\
\xi|_\Gamma = \pi & \text{on } \Gamma \times (0, T_0), \\
\xi(T_0) = \phi_0,\ \ \pi(T_0) = \phi_0|_\Gamma.
\end{cases}
\end{equation}

\begin{remark}
The change of variable $t \mapsto T_0 - t$ converts \eqref{eq:adjoint} into a forward bulk--surface problem with the same structure as \eqref{eq:state} and right-hand side $ g\mathbf{1}_\omega$ in the bulk; existence and uniqueness in $C([0, T_0]; \mathcal{H})$ follow.
\end{remark}

\subsection{The Carleman estimate, observability and null controllability of the adjoint}\label{ssec:carleman}

The cornerstone of the analysis is the global Carleman estimate of
Maniar, Meyries and Schnaubelt \cite[Lemma~3.2]{MR3669656}, established
there for the backward adjoint problem associated with bulk--surface
parabolic equations of surface-diffusion type. We use it with the
coefficients $d=1$, $\delta=\beta$, $a=b=0$, $\Gamma$ playing the role of
their reactive--diffusive boundary, and we retain only the two
zeroth-order terms of their full left-hand side (all terms there being
non-negative). With the weight functions
\begin{equation*}
\theta_{\lambda}(t, x) := \frac{e^{\lambda (m\|\psi\|_\infty + \psi(x))}}
{t(T_0 - t)},
\qquad
\eta_{\lambda}(t, x) := \frac{e^{2\lambda m \|\psi\|_\infty}
- e^{\lambda(m\|\psi\|_\infty + \psi(x))}}{t(T_0 - t)},
\end{equation*}
where $\psi$ is the function $\eta^0$ of \cite[Lemma~3.1]{MR3669656}
($\psi>0$ in $\Omega$, $\psi=0$ on $\Gamma$, $|\nabla\psi|>0$ outside
$\omega_0\subset\subset\omega$, $\partial_n\psi\le-c<0$ on $\Gamma$) and
$m>1$, there exist $s_0,\lambda_0\ge1$ and $C>0$ such that for all
$s\ge s_0$, $\lambda\ge\lambda_0$ and every solution $\Xi=(\xi,\pi)$ of
\eqref{eq:adjoint} with $g\equiv0$,
\begin{multline}
\label{eq:carleman}
s^3 \lambda^4 \int_0^{T_0}\!\!\!\int_\Omega
e^{-2 s \eta_\lambda} \theta_\lambda^{3} |\xi|^2 \,dx\, dt
+ s^3 \lambda^3 \int_0^{T_0}\!\!\!\int_\Gamma
e^{-2 s \eta_\lambda} \theta_\lambda^{3} |\pi|^2 \,dS\, dt \\
\leq C\, s^3 \lambda^4 \int_0^{T_0}\!\!\!\int_\omega
e^{-2 s \eta_\lambda} \theta_\lambda^{3} |\xi|^2 \,dx\, dt .
\end{multline}

The key feature of \eqref{eq:carleman} for data assimilation is that the right-hand side only contains integrals on $\omega \times (0,T_0)$ plus the bulk forcing term, which vanishes for the homogeneous adjoint $g \equiv 0$.

A standard cut-off argument applied to \eqref{eq:carleman} (see, e.g., \cite{MR2804643}) yields the following:
\begin{proposition}[Observability] \label{prop:obs}
Let $\Xi = (\xi, \pi)$ solve \eqref{eq:adjoint} with $g \equiv 0$ and
terminal data $\Phi_0 \in \mathcal{H}$. There exists $C_{\rm obs} > 0$,
depending only on $\Omega, \omega, \beta, T_0$, such that
\begin{equation}
\label{eq:obs_ineq}
\|\Xi(0)\|_{\mathcal{H}}^2
\leq C_{\rm obs} \int_0^{T_0}\!\!\!\int_\omega |\xi|^2\, dx\, dt.
\end{equation}
\end{proposition}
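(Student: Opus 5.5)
The argument has two ingredients: the Carleman estimate \eqref{eq:carleman}, used on a middle time interval where the weights are bounded above and below, and an energy dissipation estimate for the homogeneous backward system, which carries the bound down to $t=0$. We fix $s=s_0$ and $\lambda=\lambda_0$ once and for all, so every constant below depends only on $\Omega,\omega,\beta,T_0$ (through $\psi$ and $m$).

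\emph{Step 1: energy monotonicity.} We multiply the bulk equation of \eqref{eq:adjoint} (with $g\equiv0$) by $\xi$ and the surface equation by $\pi$, integrate, and use $\xi|_\Gamma=\pi$. The normal-derivative terms cancel, because $\int_\Omega \Delta\xi\,\xi = -\int_\Omega|\nabla\xi|^2 + \int_\Gamma \partial_n\xi\,\pi$. This gives
\[
-\frac12\frac{d}{dt}\|\Xi(t)\|_{\mathcal H}^2 + \int_\Omega|\nabla \xi|^2\,dx+\int_\Gamma|\nabla_\Gamma\pi|^2\,dS = 0 .
\]
So $t\mapsto\|\Xi(t)\|_{\mathcal H}^2$ is nondecreasing in $t$. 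In particular $\|\Xi(0)\|_{\mathcal H}^2\le\|\Xi(t)\|_{\mathcal H}^2$ for every $t\in[0,T_0]$, and averaging over $t\in[T_0/4,3T_0/4]$ yields
\[
\|\Xi(0)\|_{\mathcal H}^2\le \frac{2}{T_0}\int_{T_0/4}^{3T_0/4}\Bigl(\int_\Omega|\xi|^2dx+\beta\int_\Gamma|\pi|^2dS\Bigr)dt .
\]
We justify this first for smooth terminal data (or Galerkin approximations) and then pass to general $\Phi_0\in\mathcal H$ by density, using continuity of the solution map $\mathcal H\to C([0,T_0];\mathcal H)$.

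\emph{Step 2: weight bounds.} On $[T_0/4,3T_0/4]\times\overline\Omega$ we have $t(T_0-t)\ge 3T_0^2/16$, and the exponentials in $\theta_\lambda,\eta_\lambda$ are bounded above and below by positive constants because $\psi$ is bounded. Since $m>1$, $\eta_\lambda>0$, so $e^{-2s\eta_\lambda}\theta_\lambda^3\ge c_1>0$ there. We restrict the left-hand side of \eqref{eq:carleman} to this subinterval. The bulk and surface terms both carry $s^3$, with $\lambda^4$ and $\lambda^3$ respectively, so for fixed $s,\lambda$ the left-hand side dominates $c_2\int_{T_0/4}^{3T_0/4}\bigl(\int_\Omega|\xi|^2+\beta\int_\Gamma|\pi|^2\bigr)dt$.

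\emph{Step 3: the observation term.} On all of $(0,T_0)\times\overline\Omega$ the weight is bounded above. Indeed, $e^{-2s\eta_\lambda}\theta_\lambda^3 \le C\,e^{-2s\kappa/(t(T_0-t))}\,(t(T_0-t))^{-3}$ with $\kappa = e^{2\lambda m\|\psi\|_\infty}-e^{\lambda(m+1)\|\psi\|_\infty}>0$, again because $m>1$. The map $r\mapsto r^{-3}e^{-2s\kappa/r}$ is bounded on $(0,T_0^2/4]$. Hence the right-hand side of \eqref{eq:carleman} is at most $C_3\int_0^{T_0}\int_\omega|\xi|^2$. Chaining Steps 1--3 gives \eqref{eq:obs_ineq} with $C_{\rm obs}=2C C_3/(T_0c_2)$.

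The main obstacle is Step 1 in the weak setting, namely checking that the boundary terms cancel exactly in the $\beta$-weighted inner product \eqref{eq:Hinner}. This is why we would run the computation on smooth data first and conclude by density. The weight bounds in Steps 2--3 are routine; they rely only on the strict positivity of $\eta_\lambda$ ensured by $m>1$.
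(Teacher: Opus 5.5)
Your proposal is correct and follows essentially the same route as the paper. The paper gives no details beyond invoking a ``standard cut-off argument'' applied to \eqref{eq:carleman}, and your steps are that standard argument: restrict the weighted left-hand side to $[T_0/4,3T_0/4]$, bound the observation weight uniformly on $(0,T_0)$ using $m>1$, and carry the estimate down to $t=0$ through the monotonicity of $\|\Xi(t)\|_{\mathcal H}$, which does the job of a time cut-off.

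One small inaccuracy in Step 2: the lower bound $e^{-2s\eta_\lambda}\theta_\lambda^3\ge c_1$ comes from $\eta_\lambda$ being bounded \emph{above} on the middle interval, not from $\eta_\lambda>0$. Positivity of $\eta_\lambda$ is genuinely needed only in Step 3.
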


We can now drive the adjoint solution at $t = 0$ to zero through a suitable control $g$:

\begin{proposition}[Null controllability]
\label{prop:null}
For every $\Phi_0 \in \mathcal{H}$, there exists $g^*(\Phi_0) \in L^2(0,T_0; L^2(\omega))$ of minimal $L^2$-norm such that the solution $\Xi^* = (\xi^*, \pi^*)$ of \eqref{eq:adjoint} with control $g^*$ satisfies $\Xi^*(0) = 0$ in $\mathcal{H}$. Moreover, there exists $C_{\rm null} > 0$ such that
\begin{equation}
\label{eq:null_bound}
\|g^*(\Phi_0)\|_{L^2(0,T_0; L^2(\omega))} \leq
C_{\rm null}\, \|\Phi_0\|_{\mathcal{H}}.
\end{equation}
\end{proposition}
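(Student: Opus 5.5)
The plan is the classical HUM duality argument: minimize a quadratic functional over the terminal data of the homogeneous forward problem, and use Proposition~\ref{prop:obs} to get coercivity. By linearity, the solution of \eqref{eq:adjoint} splits as $\Xi=\Xi^0+\Xi^g$. Here $\Xi^0$ solves the homogeneous backward problem ($g\equiv0$) with terminal datum $\Phi_0$. The second piece $\Xi^g$ solves \eqref{eq:adjoint} with zero terminal datum and bulk source $-g\mathbf 1_\omega$. The requirement $\Xi(0)=0$ therefore reads $\Xi^g(0)=-\Xi^0(0)$, so the question is whether $-\Xi^0(0)$ lies in the range of the map $L:g\mapsto \Xi^g(0)$, which is bounded from $L^2(0,T_0;L^2(\omega))$ to $\mathcal H$.

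First I compute the adjoint $L^*$ by a duality identity. For $Z_0=(z_0,z_0|_\Gamma)\in\mathcal H$, let $Z=(z,r)$ solve the \emph{forward} homogeneous problem \eqref{eq:state} with $F=0$ and $Z(0)=Z_0$. Multiply the bulk equation for $\xi^g$ by $z$ and the surface equation for $\pi^g$ by $r$, integrate over $(0,T_0)$, and integrate by parts in time and space. The boundary terms $\int_\Gamma(\partial_n\xi\, r-\partial_n z\,\pi)$ cancel against those from the surface equations, because $\xi|_\Gamma=\pi$ and $z|_\Gamma=r$ and because of the $\beta$-weighted inner product \eqref{eq:Hinner}. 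This yields
\begin{equation*}
(\Xi^g(0),Z_0)_{\mathcal H}=\int_0^{T_0}\!\!\int_\omega g\,z\,dx\,dt,
\end{equation*}
up to a sign fixed by the conventions, so that $L^*Z_0=\pm z|_{\omega\times(0,T_0)}$. Since the time reversal $t\mapsto T_0-t$ maps the forward homogeneous problem onto the backward one, Proposition~\ref{prop:obs} gives $\|Z(T_0)\|_{\mathcal H}^2\le C_{\rm obs}\int_0^{T_0}\!\int_\omega|z|^2$. The same identity applied to $\Xi^0$ gives $(\Xi^0(0),Z_0)_{\mathcal H}=(\Phi_0,Z(T_0))_{\mathcal H}$.

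Next I minimize
\begin{equation*}
J(Z_0)=\tfrac12\int_0^{T_0}\!\!\int_\omega |z|^2\,dx\,dt+(\Phi_0,Z(T_0))_{\mathcal H}
\end{equation*}
over $Z_0$, with signs adjusted to the identity above. $J$ is convex and continuous, but it is coercive only in the weaker norm $\|z\|_{L^2(\omega\times(0,T_0))}$, not in $\|Z_0\|_{\mathcal H}$. This loss of coercivity in the natural norm is the main obstacle. I would handle it by completing $\mathcal H$ under the seminorm $\|z\|_{L^2(\omega\times(0,T_0))}$. This seminorm is a norm by backward uniqueness or unique continuation; alternatively one can avoid the issue by working with $\varepsilon$-penalized functionals $J_\varepsilon=J+\tfrac\varepsilon2\|Z_0\|^2$ and passing to the limit.

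Along the penalized route, the observability inequality gives the bound
\begin{equation*}
|(\Phi_0,Z(T_0))_{\mathcal H}|\le C_{\rm obs}^{1/2}\|\Phi_0\|_{\mathcal H}\,\|z\|_{L^2(\omega\times(0,T_0))}.
\end{equation*}
Hence the minimizers $\hat Z_\varepsilon$ satisfy $\|\hat z_\varepsilon\|_{L^2(\omega\times(0,T_0))}\le 2C_{\rm obs}^{1/2}\|\Phi_0\|_{\mathcal H}$. Setting $g_\varepsilon=\hat z_\varepsilon|_\omega$, the Euler--Lagrange equation gives $\|\Xi_\varepsilon(0)\|_{\mathcal H}=\varepsilon\|\hat Z_{\varepsilon,0}\|_{\mathcal H}$, which is $O(\sqrt\varepsilon)\|\Phi_0\|_{\mathcal H}$ by comparing $J_\varepsilon(\hat Z_{\varepsilon,0})\le J_\varepsilon(0)=0$. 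A weakly convergent subsequence $g_\varepsilon\rightharpoonup g$, together with continuity of $g\mapsto\Xi(0)$, then yields $\Xi(0)=0$ and $\|g\|\le 2C_{\rm obs}^{1/2}\|\Phi_0\|_{\mathcal H}$.

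Finally, the set of admissible controls is a nonempty closed affine subspace of $L^2(0,T_0;L^2(\omega))$. It therefore has a unique element of minimal norm, which I take as $g^*(\Phi_0)$. Its norm is at most that of the control just constructed, so \eqref{eq:null_bound} holds with $C_{\rm null}=2C_{\rm obs}^{1/2}$. Linearity of $\Phi_0\mapsto g^*(\Phi_0)$ follows because this minimal-norm element is the orthogonal projection of $0$ onto an affine space depending linearly on $\Phi_0$.
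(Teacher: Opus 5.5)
Your proof is correct. It rests on the same HUM duality as the paper: observability supplies coercivity, and the control is the $\omega$-trace of a homogeneous solution. Where you differ is in how you handle the loss of coercivity in $\|\cdot\|_{\mathcal H}$.

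The paper completes $\mathcal H$ in the norm $a(\cdot,\cdot)^{1/2}$ and applies Lax--Milgram there. This produces the HUM control in one step, directly yields the bound with $C_{\rm null}=C_{\rm obs}^{1/2}$, and gives the HUM form of $g^*$ that underlies Remark~\ref{rmk:exact_vs_pen}. The price is working in an abstract completion whose elements need not lie in $\mathcal H$, so $\widehat\xi^*|_\omega$ is only defined by density. Definiteness of that norm comes from injectivity of $S$, not from Proposition~\ref{prop:obs} alone. Minimality is left as ``one verifies''.

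Your $\varepsilon$-penalization avoids these issues:
\begin{itemize}
\item It stays inside $\mathcal H$ and never needs definiteness.
\item Minimality and linearity come for free from the Hilbert projection onto $\{g : Lg=-S\Phi_0\}$.
\item Up to sign conventions, your $J_\varepsilon$ is exactly the dual functional $\mathcal D_\varepsilon$ of Theorem~\ref{thm:penalized}(ii) with $\lambda=-Z_0$. You are therefore running Theorem~\ref{thm:penalized}(iii) in reverse, with observability rather than the existence of $g^*$ supplying the uniform bound, which is a non-circular ordering.
\end{itemize}

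Two small refinements. First, test the Euler--Lagrange equation with $\hat Z_{\varepsilon,0}$ itself instead of comparing with $J_\varepsilon(0)$. This gives $\|\hat z_\varepsilon\|_{L^2(\omega\times(0,T_0))}\le C_{\rm obs}^{1/2}\|\Phi_0\|_{\mathcal H}$, recovering the sharp constant. Second, each $g_\varepsilon$ lies in $\operatorname{ran}L^*\subset(\ker L)^\perp$, which is a weakly closed subspace. Your weak limit is therefore already the minimal-norm control, and the final projection step is a convenience rather than a necessity.

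Finally, you use \emph{forward} homogeneous solutions as dual variables, which is the orientation consistent with Lemma~\ref{lem:Lstar}. The paper's proof instead writes $a$ with backward solutions and sets $g^*=\widehat\xi^*|_\omega$. Read literally, that choice needs the time reversal $t\mapsto T_0-t$ of $\widehat\xi^*$ to actually produce a null control.
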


\begin{proof}
For fixed $\Phi_0 \in \mathcal{H}$, define on $\mathcal{H} \times \mathcal{H}$ the bilinear form 
$$a(\widetilde\Phi_0, \widehat\Phi_0) := \int_0^{T_0}\,\,\int_\omega \widetilde\xi \cdot \widehat\xi\, dx\, dt,$$ 
where $\widetilde\xi$ and $\widehat\xi$ are solutions of the homogeneous adjoint problem ($g \equiv 0$) with terminal data $\widetilde\Phi_0$ and $\widehat\Phi_0$.

Let $\mathcal{X}$ be the completion of $\mathcal{H}$ for the norm $\|\widetilde\Phi_0\|_{\mathcal{X}}^2 := a(\widetilde\Phi_0,\widetilde\Phi_0)$. By Proposition~\ref{prop:obs}, $\|\cdot\|_{\mathcal{X}}$ is a genuine (definite) norm and the linear form $\ell(\widetilde\Phi_0) := (\widetilde\Xi(0), \Phi_0)_{\mathcal{H}}$ is continuous on $\mathcal{X}$; the form $a$ is coercive on $\mathcal{X}$ by construction. Lax--Milgram yields a unique $\widehat\Phi_0^* \in \mathcal{X}$ with $a(\widetilde\Phi_0, \widehat\Phi_0^*) = \ell(\widetilde\Phi_0)$ for all $\widetilde\Phi_0$. Setting $g^*(\Phi_0) := \widehat\xi^*|_\omega$, one verifies $\Xi^*(0) = 0$ and that $g^*$ has minimal $L^2$-norm.



Estimate \eqref{eq:null_bound} follows from Proposition~\ref{prop:obs} and the boundedness of $\ell$.
\end{proof}

\subsection{Reconstruction formula and the operator $\Lambda$} \label{ssec:repr}

\begin{theorem}[Reconstruction formula]
\label{thm:repr}
Let $\Phi_0 \in \mathcal{H}$ and let $g^*(\Phi_0)$ be the null-control of Proposition~\ref{prop:null}, with adjoint solution $\Xi^* = (\xi^*, \pi^*)$ satisfying $\Xi^*(0) = 0$. Then, for every solution $U = (u, p)$ of
\eqref{eq:state} with sources $F = (f, f_\Gamma)$,
\begin{equation}
\label{eq:repr}
\bigl( U(T_0), \Phi_0 \bigr)_{\mathcal{H}}
= \int_0^{T_0}\!\!\!\int_\omega h\, g^*(\Phi_0)\, dx\, dt
+ \int_0^{T_0}\!\!\!\int_\Omega f \cdot \xi^*\, dx\, dt
+ \int_0^{T_0}\!\!\!\int_\Gamma f_\Gamma \cdot \pi^*\, dS\, dt.
\end{equation}
The right-hand side does \emph{not} involve the unknown initial datum
$U_0$.
\end{theorem}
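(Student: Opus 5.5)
The plan is the classical duality (integration by parts in time and space) argument. I multiply the bulk equation of \eqref{eq:state} by $\xi^*$ and the surface equation by $\pi^*$, integrate over $(0,T_0)$, and add the two identities. I would first do this for smooth (strong) solutions, say with $U_0$, $F$, $\Phi_0$ regular and a smooth control. I then extend to general data by density, using the continuous dependence in $C([0,T_0];\mathcal H)$ of both \eqref{eq:state} and \eqref{eq:adjoint} on their data, together with the bound \eqref{eq:null_bound}.

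\textbf{Time derivatives.} Integrating by parts in $t$ gives
\[
\int_0^{T_0}\!\!\int_\Omega \partial_t u\,\xi^* \,dx\,dt+\beta\int_0^{T_0}\!\!\int_\Gamma \partial_t p\,\pi^*\,dS\,dt
= \bigl(U(T_0),\Xi^*(T_0)\bigr)_{\mathcal H}-\bigl(U(0),\Xi^*(0)\bigr)_{\mathcal H}
-\int_0^{T_0}\!\!\int_\Omega u\,\partial_t\xi^*\,dx\,dt-\beta\int_0^{T_0}\!\!\int_\Gamma p\,\partial_t\pi^*\,dS\,dt .
\]
Here $\Xi^*(T_0)=\Phi_0$ and $\Xi^*(0)=0$ by Proposition~\ref{prop:null}. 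This is exactly where the unknown $U_0$ disappears.

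\textbf{Spatial terms.} I apply Green's formula in $\Omega$ twice:
\[
-\int_\Omega \Delta u\,\xi^* = -\int_\Omega u\,\Delta\xi^* - \int_\Gamma \partial_n u\,\xi^* + \int_\Gamma u\,\partial_n\xi^*.
\]
On the closed manifold $\Gamma$, I use the symmetry $\int_\Gamma(-\Delta_\Gamma p)\pi^* = \int_\Gamma p(-\Delta_\Gamma\pi^*)$. I then use the trace conditions $\xi^*|_\Gamma=\pi^*$ and $u|_\Gamma=p$. The term $-\int_\Gamma\partial_n u\,\pi^*$ from the bulk cancels against the $+\int_\Gamma \partial_n u\,\pi^*$ coming from the surface equation. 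The remaining term $\int_\Gamma p\,\partial_n\xi^*$ combines with the surface adjoint operator.

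Collecting everything, the right-hand side equals
\[
(U(T_0),\Phi_0)_{\mathcal H} + \int\!\!\int_\Omega u\,(-\partial_t\xi^*-\Delta\xi^*) + \int\!\!\int_\Gamma p\,(-\beta\partial_t\pi^*+\partial_n\xi^*-\Delta_\Gamma\pi^*).
\]
By \eqref{eq:adjoint}, the surface bracket vanishes and the bulk bracket equals $-g^*\mathbf 1_\omega$.

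\textbf{Conclusion and sign check.} The computation yields
\[
\int\!\!\int_\Omega f\xi^*+\int\!\!\int_\Gamma f_\Gamma\pi^* = (U(T_0),\Phi_0)_{\mathcal H} - \int\!\!\int_\omega u\,g^*.
\]
Substituting $u|_\omega=h$ from \eqref{eq:obs} and rearranging gives \eqref{eq:repr}. One caveat must be checked against the conventions: the surface equation of \eqref{eq:state} carries no weight $\beta$ in front of $f_\Gamma$. The $\mathcal H$-pairing with weight $\beta$ on $\Gamma$ pairs $\beta\partial_t p$ correctly only if the surface equation is tested with $\pi^*$ unweighted. I will therefore test the bulk equation with $\xi^*$ and the surface equation with $\pi^*$, both without extra factors. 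In that case the time boundary term is exactly $\int_\Omega u\xi^*+\beta\int_\Gamma p\pi^*$, which is $(\cdot,\cdot)_{\mathcal H}$ as required.

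\textbf{Main obstacle.} The delicate step is the regularity justification. The identity must make sense when $U_0,\Phi_0$ are only in $\mathcal H$ and $g^*$ is only in $L^2(0,T_0;L^2(\omega))$, where the normal derivatives are not defined pointwise. I would handle this by approximation. Take smooth $U_0^n\to U_0$, smooth $F^n\to F$, and smooth $g^n\to g^*$ in $L^2$, the latter keeping $\Phi_0$ fixed and regularized. The identity holds at level $n$, with the extra term $(U^n(0),\Xi^n(0))_{\mathcal H}$, where $\Xi^n$ solves \eqref{eq:adjoint} with control $g^n$ and terminal datum $\Phi_0^n$. I then pass to the limit using the energy estimates in $C([0,T_0];\mathcal H)$ for both systems. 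This gives $\Xi^n(0)\to\Xi^*(0)=0$, so the extra term vanishes in the limit.
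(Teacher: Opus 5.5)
Your proposal is correct and is essentially the paper's duality argument. The paper tests only the bulk equation with $\xi^*$ and substitutes $\partial_n u$ and $\partial_n \xi^*$ from the two boundary equations into the Green boundary term, while you test the surface equation with $\pi^*$ and add; this is the same computation with different bookkeeping, and in both versions $U_0$ drops out because $\Xi^*(0)=0$. Your density argument, which keeps the term $(U^n(0),\Xi^n(0))_{\mathcal{H}}$ and lets it vanish in the limit, soundly justifies the integrations by parts for $\mathcal{H}$-data and $g^*\in L^2(0,T_0;L^2(\omega))$, a step the paper's formal computation leaves out.
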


\begin{proof}
Multiply the first equation of \eqref{eq:state} by $\xi^*$ and integrate over $\Omega \times (0, T_0)$. The time term becomes, after integration by parts,
\begin{equation}
\label{eq:proof_time}
\int_0^{T_0}\!\!\!\int_\Omega \partial_t u \cdot \xi^*\, dx\, dt
= \int_\Omega u(T_0) \phi_0\, dx
- \int_0^{T_0}\!\!\!\int_\Omega u\, \partial_t \xi^*\, dx\, dt,
\end{equation}
where we used $\xi^*(T_0) = \phi_0$ and $\xi^*(0) = 0$. The Laplacian term, by Green's identity, becomes
\begin{equation}
\label{eq:proof_space}
\int_0^{T_0}\!\!\!\int_\Omega (-\Delta u) \xi^*\, dx\, dt
= \int_0^{T_0}\!\!\!\int_\Omega u (-\Delta \xi^*)\, dx\, dt
+ \int_0^{T_0}\!\!\!\int_\Gamma \bigl( u \partial_n \xi^*
- \partial_n u \cdot \xi^* \bigr) dS\, dt.
\end{equation}
Now substitute the boundary equations: from the second equation of \eqref{eq:state}, $\partial_n u = -\beta \partial_t p + \Delta_\Gamma p + f_\Gamma$, and from the second equation of \eqref{eq:adjoint}, $\partial_n \xi^* = \beta \partial_t \pi^* + \Delta_\Gamma \pi^*$. Using $u|_\Gamma = p$, $\xi^*|_\Gamma = \pi^*$, the boundary integral in \eqref{eq:proof_space} becomes
\begin{align*}
&\int_0^{T_0}\!\!\!\int_\Gamma \bigl( u\, \partial_n \xi^*
- \partial_n u \cdot \xi^* \bigr) dS dt \\
&\quad = \int_0^{T_0}\!\!\!\int_\Gamma \bigl[
p (\beta\partial_t \pi^* + \Delta_\Gamma \pi^*)
- (-\beta\partial_t p + \Delta_\Gamma p + f_\Gamma) \pi^* \bigr] dS dt \\
&\quad = \int_0^{T_0}\!\!\!\int_\Gamma
\beta\, \partial_t (p\, \pi^*)\, dS\, dt
+ \int_0^{T_0}\!\!\!\int_\Gamma
( p\, \Delta_\Gamma \pi^* - \Delta_\Gamma p \cdot \pi^* )\, dS\, dt
- \int_0^{T_0}\!\!\!\int_\Gamma f_\Gamma\, \pi^*\, dS\, dt.
\end{align*}
The middle integral vanishes by Green's identity on the closed surface $\Gamma$. Integrating $\partial_t(p\, \pi^*)$ in $t$ and using $\pi^*(0) = 0$, $\pi^*(T_0) = \phi_0|_\Gamma$ gives 
$$\int_0^{T_0}\!\int_\Gamma \beta \partial_t(p \pi^*) dS dt = \beta \int_\Gamma p(T_0) \phi_0|_\Gamma\, dS.$$
Combining \eqref{eq:proof_time} and \eqref{eq:proof_space} into
$$\int_0^{T_0}\!\int_\Omega (\partial_t u - \Delta u) \xi^* = \int_0^{T_0}\! \int_\Omega f \xi^*$$ 
yields 
\begin{multline*}
\int_\Omega u(T_0) \phi_0\, dx
+ \beta \int_\Gamma p(T_0) \phi_0|_\Gamma\, dS \\
= -\int_0^{T_0}\!\!\!\int_\Omega u (-\partial_t \xi^* - \Delta \xi^*) dx dt
+ \int_0^{T_0}\!\!\!\int_\Omega f \xi^* dx dt
+ \int_0^{T_0}\!\!\!\int_\Gamma f_\Gamma \pi^* dS dt.
\end{multline*}
Finally, $-\partial_t \xi^* - \Delta \xi^* = -g^*\mathbf{1}_\omega$ and $u|_\omega = h$, so the first integral on the right equals 
$$\int_0^{T_0}\!\int_\omega h\, g^* dx dt.$$
The left-hand side is exactly $(U(T_0), \Phi_0)_{\mathcal{H}}$ by \eqref{eq:Hinner} and yields \eqref{eq:repr}.
\end{proof}

\begin{remark}
\label{rmk:basis} 
By taking $\Phi_0$ to run over an orthonormal basis of $\mathcal{H}$---in practice, the eigenpairs of the bulk--surface Laplace operator (Section \ref{sec:discretization})---one recovers the full state $U(T_0)$.
\end{remark}


\begin{definition}[The operator $\Lambda$]
\label{def:Lambda}
For $\Phi_0 \in \mathcal{H}$, let $g^*(\Phi_0) \in L^2(0,T_0; L^2(\omega))$
be the null-control of Proposition~\ref{prop:null}. We define the
\emph{reconstruction operator} $\Lambda : \mathcal{H} \to \mathcal{H}$ as
the Riesz representation of the bounded symmetric bilinear form
\begin{equation}
\label{eq:Lambda_def}
(\Lambda \Phi_0, \Psi_0)_{\mathcal{H}}
:= \int_0^{T_0}\!\!\!\int_\omega g^*(\Phi_0)\, g^*(\Psi_0)\, dx\, dt,
\qquad \Phi_0, \Psi_0 \in \mathcal{H};
\end{equation}
that is, $\Lambda \Phi_0$ is the unique element of $\mathcal{H}$ for which
\eqref{eq:Lambda_def} holds for every $\Psi_0 \in \mathcal{H}$.
\end{definition}

\begin{remark} \label{rmk:Lambda_sym}
Definition \ref{def:Lambda} makes $\Lambda$ symmetric by construction. The forward map sending $\Phi_0$ to the value at $T_0$ of the solution of \eqref{eq:state} with source $g^*(\Phi_0)\mathbf{1}_\omega$, zero surface forcing and zero initial datum, is \emph{not} self-adjoint for partial observation $\omega \subsetneq \Omega$, since the control Gramian and the heat semigroup do not commute. This is why we assemble the discrete object of Section~\ref{sec:discretization} \emph{directly} as a Gram matrix $\boldsymbol{\mathcal{C}}_h$---symmetric positive semi-definite by construction---rather than through that forward map, which would carry an $\mathcal O(1)$ antisymmetric part.
\end{remark}

\begin{proposition}[Properties of $\Lambda$]
\label{prop:Lambda}
$\Lambda : \mathcal{H} \to \mathcal{H}$ is linear, bounded, symmetric and
positive semi-definite; its range is dense in $\mathcal{H}$.
\end{proposition}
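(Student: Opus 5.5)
The plan is to read off linearity, boundedness, symmetry and positivity directly from Definition~\ref{def:Lambda} and Proposition~\ref{prop:null}. The density of the range then reduces to a backward-uniqueness property of the homogeneous adjoint problem.

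\emph{Step 1: linearity of $g^*$.} In the proof of Proposition~\ref{prop:null}, $g^*(\Phi_0)=\widehat\xi^*|_\omega$, where $\widehat\Phi_0^*\in\mathcal X$ is the Lax--Milgram solution of $a(\cdot,\widehat\Phi_0^*)=\ell_{\Phi_0}(\cdot)$ with $\ell_{\Phi_0}(\widetilde\Phi_0)=(\widetilde\Xi(0),\Phi_0)_{\mathcal H}$. Since $\Phi_0\mapsto\ell_{\Phi_0}$ is linear and the Lax--Milgram solution map is linear, $\Phi_0\mapsto\widehat\Phi_0^*$ is linear. The map $\widehat\Phi_0\mapsto\widehat\xi|_{\omega\times(0,T_0)}$ extends by construction of $\mathcal X$ to a linear isometry into $L^2(0,T_0;L^2(\omega))$. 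Hence $g^*$ is linear. Equivalently, $g^*(\Phi_0)$ is the minimal-norm element of an affine closed subspace depending linearly on $\Phi_0$, i.e.\ an orthogonal projection, which is linear.

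\emph{Step 2: the Riesz construction.} The form $b(\Phi_0,\Psi_0):=\int_0^{T_0}\!\int_\omega g^*(\Phi_0)g^*(\Psi_0)$ is bilinear by Step~1 and symmetric. By Cauchy--Schwarz and \eqref{eq:null_bound}, $|b(\Phi_0,\Psi_0)|\le C_{\rm null}^2\|\Phi_0\|_{\mathcal H}\|\Psi_0\|_{\mathcal H}$. The Riesz theorem then gives a unique linear operator $\Lambda$ with $\|\Lambda\|\le C_{\rm null}^2$. Symmetry of $b$ gives $(\Lambda\Phi_0,\Psi_0)_{\mathcal H}=(\Phi_0,\Lambda\Psi_0)_{\mathcal H}$, and $(\Lambda\Phi_0,\Phi_0)_{\mathcal H}=\|g^*(\Phi_0)\|^2\ge0$.

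\emph{Step 3: dense range.} Since $\Lambda$ is bounded and self-adjoint, $\overline{\operatorname{Ran}\Lambda}=(\ker\Lambda)^\perp$, so it suffices to show $\ker\Lambda=\{0\}$. If $\Lambda\Phi_0=0$, then $\|g^*(\Phi_0)\|^2=(\Lambda\Phi_0,\Phi_0)_{\mathcal H}=0$, so $g^*(\Phi_0)=0$. Thus the \emph{uncontrolled} adjoint solution with terminal datum $\Phi_0$ satisfies $\Xi(0)=0$, and it remains to deduce $\Phi_0=0$.

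This backward-uniqueness step is the main obstacle, because observability (Proposition~\ref{prop:obs}) runs the wrong way here. I would prove it spectrally. Let $A$ be the Wentzell Laplacian $A(v,v|_\Gamma)=(\Delta v,\beta^{-1}(\Delta_\Gamma v-\partial_n v))$ on $\mathcal H$ with the $\beta$-weighted product \eqref{eq:Hinner}. The same Green identities used in the proof of Theorem~\ref{thm:repr} (in the bulk, and on the closed surface $\Gamma$) show that
\[
(AU,V)_{\mathcal H}=-\int_\Omega\nabla u\cdot\nabla v\,dx-\int_\Gamma\nabla_\Gamma p\cdot\nabla_\Gamma q\,dS .
\]
Hence $A$ is symmetric and nonpositive. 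Being the operator associated with a closed, densely defined, nonnegative form, $A$ is self-adjoint, as in \cite{MR3669656,MR4568436}. It has compact resolvent, so $\mathcal H$ has an orthonormal eigenbasis $(E_k)$ with $-AE_k=\lambda_kE_k$ and $\lambda_k\ge0$. After time reversal, $\Xi(0)=e^{T_0A}\Phi_0$. Therefore
\[
0=\|\Xi(0)\|_{\mathcal H}^2=\sum_k e^{-2\lambda_kT_0}|(\Phi_0,E_k)_{\mathcal H}|^2 ,
\]
which forces every coefficient $(\Phi_0,E_k)_{\mathcal H}$ to vanish, i.e.\ $\Phi_0=0$. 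So $\ker\Lambda=\{0\}$ and the range is dense.
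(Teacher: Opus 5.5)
Your proof is correct and follows the paper's argument. Boundedness comes from Cauchy--Schwarz with the null-control bound (giving $\|\Lambda\|\le C_{\rm null}^2$), symmetry and positivity are read off the defining bilinear form, and density of the range follows from $\overline{\operatorname{ran}\Lambda}=(\ker\Lambda)^{\perp}$ together with injectivity of the time-reversed bulk--surface semigroup. The only difference is that you supply details the paper leaves implicit: you establish the linearity of $\Phi_0\mapsto g^*(\Phi_0)$, and you give a spectral proof of the backward-uniqueness step $e^{T_0A}\Phi_0=0\Rightarrow\Phi_0=0$, which the paper simply asserts.
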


\begin{proof}
Boundedness follows from \eqref{eq:null_bound} and Cauchy--Schwarz: for
all $\Phi_0, \Psi_0 \in \mathcal{H}$,
$$
|(\Lambda \Phi_0, \Psi_0)_{\mathcal{H}}|
\leq \|g^*(\Phi_0)\|_{L^2(\omega\times(0,T_0))}\,
     \|g^*(\Psi_0)\|_{L^2(\omega\times(0,T_0))}
\leq C_{\rm null}^2\, \|\Phi_0\|_{\mathcal{H}}\,\|\Psi_0\|_{\mathcal{H}},
$$
hence $\|\Lambda\| \leq C_{\rm null}^2$. Symmetry is immediate, since the
right-hand side of \eqref{eq:Lambda_def} is symmetric in
$(\Phi_0, \Psi_0)$. Taking $\Psi_0 = \Phi_0$ gives
$$
(\Lambda \Phi_0, \Phi_0)_{\mathcal{H}}
= \|g^*(\Phi_0)\|_{L^2(\omega\times(0,T_0))}^2 \geq 0,
$$
so $\Lambda$ is positive semi-definite. It is in fact positive definite:
if $(\Lambda \Phi_0, \Phi_0)_{\mathcal{H}} = 0$ then $g^*(\Phi_0) \equiv 0$,
so the homogeneous adjoint ($g \equiv 0$) with terminal datum $\Phi_0$
satisfies $\Xi^*(0) = 0$. Under the change of variable $t \mapsto T_0 - t$
this is the forward bulk--surface heat semigroup acting on $\Phi_0$ over
time $T_0$, which is injective; therefore $\Phi_0 = 0$. Since $\Lambda$ is
bounded and self-adjoint with trivial kernel,
$\overline{\operatorname{ran}\Lambda}
= (\ker \Lambda)^{\perp} = \mathcal{H}$, i.e.\ the range is dense.
\end{proof}


\subsection{Tikhonov-penalized minimization}
\label{ssec:penalized}

Solving the null-control problem $\Xi(0) = 0$ exactly is impractical at the
discrete level; following \cite{MR2491591,MR2804643} we relax it by
penalization, whose numerical treatment as a variational problem is
classical \cite{MR2423313}. For $\alpha > 0$, define
\begin{equation}
\label{eq:Jalpha}
J_\alpha(g) := \frac{1}{2} \int_0^{T_0}\!\!\!\int_\omega |g|^2\, dx\, dt
+ \frac{1}{2\alpha}\, \|\Xi(0)\|_{\mathcal{H}}^2,
\qquad g \in L^2(0,T_0; L^2(\omega)),
\end{equation}
where $\Xi = \Xi[g,\Phi_0] = (\xi, \pi)$ solves \eqref{eq:adjoint} with
control $g$ and terminal datum $\Phi_0 \in \mathcal{H}$. We write
$\mathcal{U}_{\rm ad}(\Phi_0) := \{ g : \Xi[g,\Phi_0](0) = 0 \}$ for the
(nonempty, by Proposition~\ref{prop:null}) set of exact controls.

\paragraph{Operator setting.}
To exploit convex duality we recast \eqref{eq:Jalpha} through the linear
control-to-state structure of \eqref{eq:adjoint}. By linearity its trace
at $t = 0$ splits as $\Xi[g,\Phi_0](0) = L g + S \Phi_0$, where
\begin{itemize}
\item $S : \mathcal{H} \to \mathcal{H}$, $S\Phi_0 := \Xi[0,\Phi_0](0)$, is
the homogeneous adjoint propagator. Under $t \mapsto T_0 - t$ it coincides
with the forward bulk--surface semigroup $e^{T_0 \mathcal{A}}$ generated by
the (self-adjoint, dissipative) bulk--surface Laplacian $\mathcal{A}$ on
$\mathcal{H}$; hence $S$ is bounded, self-adjoint and injective;
\item $L : L^2(0,T_0; L^2(\omega)) \to \mathcal{H}$, $L g := \Xi[g,0](0)$,
is the bounded control-to-initial-state operator.
\end{itemize}

\begin{lemma}[Adjoint and Gramian]
\label{lem:Lstar}
For every $\zeta \in \mathcal{H}$, $L^* \zeta = - y|_\omega$, where
$Y = (y, \cdot)$ is the source-free forward solution of \eqref{eq:state}
($f = 0$, $f_\Gamma = 0$) with initial datum $Y(0) = \zeta$. Consequently
the controllability Gramian $\mathcal{C} := L L^* : \mathcal{H} \to
\mathcal{H}$ is bounded, self-adjoint and positive semi-definite, with
\begin{equation}
\label{eq:gramian}
(\mathcal{C}\zeta, \zeta)_{\mathcal{H}}
= \int_0^{T_0}\!\!\!\int_\omega |y|^2\, dx\, dt .
\end{equation}
\end{lemma}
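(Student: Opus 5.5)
The adjoint $L^*$ is identified through a duality (Green-type) identity between the control-to-state map of \eqref{eq:adjoint} and the source-free forward problem \eqref{eq:state}. The computation is essentially the one in the proof of Theorem~\ref{thm:repr}, now with different boundary-in-time data. Fix $g \in L^2(0,T_0;L^2(\omega))$ and let $\Xi = (\xi,\pi) = \Xi[g,0]$, so that $\Xi(T_0) = 0$ and $Lg = \Xi(0)$. Fix $\zeta \in \mathcal{H}$ and let $Y = (y,q)$ solve \eqref{eq:state} with $f = 0$, $f_\Gamma = 0$ and $Y(0) = \zeta$. The goal is the identity
\begin{equation*}
(Lg, \zeta)_{\mathcal{H}} = \bigl(\Xi(0), Y(0)\bigr)_{\mathcal{H}} = -\int_0^{T_0}\!\!\!\int_\omega g\, y \, dx\, dt,
\end{equation*}
which reads precisely $L^*\zeta = -y|_\omega$.

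To obtain it, I will differentiate $t \mapsto (\Xi(t), Y(t))_{\mathcal{H}} = \int_\Omega \xi y\,dx + \beta\int_\Gamma \pi q\,dS$. In the bulk I substitute $\partial_t y = \Delta y$ and $\partial_t \xi = -\Delta \xi + g\mathbf 1_\omega$. In the boundary terms I substitute $\beta\partial_t q = -\partial_n y + \Delta_\Gamma q$ and $\beta \partial_t \pi = \partial_n \xi - \Delta_\Gamma \pi$. Green's formula in $\Omega$ produces $\int_\Gamma(\xi\,\partial_n y - y\,\partial_n\xi)\,dS$. Because of the trace couplings $\xi|_\Gamma = \pi$ and $y|_\Gamma = q$, this boundary term cancels exactly against the normal-derivative contributions coming from the surface equations. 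The remaining Laplace--Beltrami terms cancel by symmetry of $\Delta_\Gamma$ on the closed surface $\Gamma$. What survives is
\begin{equation*}
\frac{d}{dt}(\Xi,Y)_{\mathcal{H}} = \int_\omega g\,y\,dx .
\end{equation*}
Integrating from $0$ to $T_0$ and using $\Xi(T_0) = 0$ gives $-(\Xi(0),\zeta)_{\mathcal{H}} = \int_0^{T_0}\!\int_\omega g y$, which is the claimed identity.

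The main obstacle is regularity. The computation above requires strong solutions, while $g$ is only $L^2$ and $\zeta$ is only in $\mathcal{H}$. I will first carry it out for smooth $g$ (say $C_c^\infty$ in time with values in $L^2(\omega)$) and for $\zeta$ in the domain of the bulk--surface Laplacian $\mathcal{A}$, so that both solutions are strong. I will then pass to the limit by density, using the continuous dependence in $C([0,T_0];\mathcal{H})$ quoted after \eqref{eq:Hinner} for both the forward and the time-reversed problems. Both sides of the identity are continuous in $(g,\zeta) \in L^2(0,T_0;L^2(\omega)) \times \mathcal{H}$, so the identity extends to all data. This argument also shows that $\zeta \mapsto -y|_\omega$ is bounded, since $\|y\|_{L^2(\omega\times(0,T_0))} \le \sqrt{T_0}\,\|Y\|_{C([0,T_0];\mathcal{H})} \le C\|\zeta\|_{\mathcal{H}}$.

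The statements about the Gramian then follow formally. The operator $\mathcal{C} = LL^*$ is bounded as a composition of bounded operators. It is self-adjoint because $(LL^*)^* = LL^*$. Moreover
\begin{equation*}
(\mathcal{C}\zeta,\zeta)_{\mathcal{H}} = \|L^*\zeta\|^2_{L^2(0,T_0;L^2(\omega))} = \int_0^{T_0}\!\!\!\int_\omega |y|^2\,dx\,dt \ge 0,
\end{equation*}
which is \eqref{eq:gramian} and gives positive semi-definiteness.
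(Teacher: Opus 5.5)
Your proposal is correct and follows the paper's proof. Both obtain the duality identity $(Lg,\zeta)_{\mathcal{H}} = -\int_0^{T_0}\!\int_\omega g\,y\,dx\,dt$ by pairing $\Xi[g,0]$ against the source-free forward solution exactly as in the proof of Theorem~\ref{thm:repr}; your differentiation of $(\Xi(t),Y(t))_{\mathcal{H}}$ is the same computation, and your sign bookkeeping checks out. Both then read off the Gramian properties from $(\mathcal{C}\zeta,\zeta)_{\mathcal{H}} = \|L^*\zeta\|^2$, and your density argument through strong solutions makes explicit a regularity justification the paper leaves implicit.
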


\begin{proof}
Let $\delta\Xi := \Xi[g,0]$ (control $g$, zero terminal datum), so
$\delta\Xi(0) = L g$ and $\delta\Xi(T_0) = 0$. Pairing the source-free
forward $Y$ against $\delta\Xi$ exactly as in the proof of
Theorem~\ref{thm:repr} (now with vanishing sources for $Y$ and vanishing
terminal datum for $\delta\Xi$) gives the duality identity
$$
(\zeta, L g)_{\mathcal{H}} = (\zeta, \delta\Xi(0))_{\mathcal{H}}
= - \int_0^{T_0}\!\!\!\int_\omega y\, g\, dx\, dt,
$$
whence $L^* \zeta = - y|_\omega$. Then $(\mathcal{C}\zeta, \zeta) =
\|L^*\zeta\|_{L^2(\omega\times(0,T_0))}^2$, which is \eqref{eq:gramian};
self-adjointness and positivity are immediate.
\end{proof}

\begin{remark}
  In these terms 
$$J_\alpha(g) = \tfrac12 \|g\|_{L^2(\omega\times(0,T_0))}^2
+ \tfrac{1}{2\alpha} \| L g + S\Phi_0 \|_{\mathcal{H}}^2.$$  
\end{remark}

\paragraph{The data functional.}
Let $\Xi_\alpha = \Xi[g_\alpha, \Phi_0] = (\xi_\alpha, \pi_\alpha)$ be the
adjoint generated by the (yet to be defined) penalized control
$g_\alpha = g_\alpha(\Phi_0)$, which depends linearly on $\Phi_0$. For the
data $(h, f, f_\Gamma)$ held fixed, the map
\begin{equation}
\label{eq:Rfunctional}
\Phi_0 \longmapsto
\int_0^{T_0}\!\!\!\int_\omega h\, g_\alpha(\Phi_0)\, dx\, dt
+ \int_0^{T_0}\!\!\!\int_\Omega f\, \xi_\alpha(\Phi_0)\, dx\, dt
+ \int_0^{T_0}\!\!\!\int_\Gamma f_\Gamma\, \pi_\alpha(\Phi_0)\, dS\, dt
\end{equation}
is linear and bounded on $\mathcal{H}$, uniformly in $\alpha$ by
\eqref{eq:null_bound} and parabolic energy estimates. We denote by
$R_\alpha \in \mathcal{H}$ its Riesz representative. By construction
$R_\alpha$ is assembled from the measured data and the computable penalized
controls alone; it does \emph{not} presuppose knowledge of $U(T_0)$.

\begin{theorem}[Penalized reconstruction]
\label{thm:penalized}
Let $\Phi_0 \in \mathcal{H}$ and $\alpha > 0$.
\begin{enumerate}
\item[\textup{(i)}] $J_\alpha$ admits a unique minimizer $g_\alpha \in
L^2(0,T_0; L^2(\omega))$, characterized by the optimality system
\begin{equation}
\label{eq:opt_pair}
g_\alpha = \widetilde\phi_\alpha|_\omega,
\qquad
\alpha\, \widetilde\Phi_\alpha(0) = \Xi_\alpha(0) \quad \text{in }
\mathcal{H},
\end{equation}
where $\Xi_\alpha = \Xi[g_\alpha,\Phi_0]$ and $\widetilde\Phi_\alpha =
(\widetilde\phi_\alpha, \widetilde\pi_\alpha)$ is the source-free forward
solution of \eqref{eq:state} with initial datum $\widetilde\Phi_\alpha(0)$.

\item[\textup{(ii)}] By Fenchel duality, $g_\alpha = - L^* \lambda_\alpha$,
where the dual variable $\lambda_\alpha = \widetilde\Phi_\alpha(0) \in
\mathcal{H}$ is the unique solution of the well-posed normal equation
\begin{equation}
\label{eq:dual_eq}
(\alpha\, \mathrm{Id} + \mathcal{C})\, \lambda_\alpha = S \Phi_0 .
\end{equation}
The operator $\alpha\,\mathrm{Id} + \mathcal{C}$ is self-adjoint and
coercive on $\mathcal{H}$ with coercivity constant $\alpha$.

\item[\textup{(iii)}] As $\alpha \to 0^+$, $g_\alpha \to g^*(\Phi_0)$
strongly in $L^2(0,T_0; L^2(\omega))$, and
$\|\Xi_\alpha(0)\|_{\mathcal{H}} \leq \alpha^{1/2}\, \|g^*(\Phi_0)\|$.

\item[\textup{(iv)}] The reconstruction is consistent: for every
$\Phi_0 \in \mathcal{H}$,
$$
\langle R_\alpha, \Phi_0 \rangle_{\mathcal{H}}
\xrightarrow[\alpha \to 0^+]{} (U(T_0), \Phi_0)_{\mathcal{H}} .
$$
\end{enumerate}
\end{theorem}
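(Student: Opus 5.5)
The proof works entirely in the operator form $J_\alpha(g)=\tfrac12\|g\|^2+\tfrac1{2\alpha}\|Lg+S\Phi_0\|_{\mathcal H}^2$ (Remark after Lemma~\ref{lem:Lstar}), with $L$ bounded and $\mathcal C=LL^*$ as in Lemma~\ref{lem:Lstar}.

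\textbf{Part (i).} $J_\alpha$ is continuous and $\tfrac12$-strongly convex on $L^2(0,T_0;L^2(\omega))$, hence coercive. The direct method gives existence, and strict convexity gives uniqueness. The Euler--Lagrange equation reads
\[
g_\alpha+\alpha^{-1}L^*\bigl(Lg_\alpha+S\Phi_0\bigr)=0 .
\]
Set $\lambda_\alpha:=\alpha^{-1}\Xi_\alpha(0)$. Then $g_\alpha=-L^*\lambda_\alpha$. By Lemma~\ref{lem:Lstar}, $-L^*\lambda_\alpha$ is the restriction to $\omega$ of the source-free forward solution $\widetilde\Phi_\alpha$ with $\widetilde\Phi_\alpha(0)=\lambda_\alpha$. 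This is exactly \eqref{eq:opt_pair}, with $\alpha\widetilde\Phi_\alpha(0)=\Xi_\alpha(0)$.

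\textbf{Part (ii).} Substitute $g_\alpha=-L^*\lambda_\alpha$ into $\alpha\lambda_\alpha=Lg_\alpha+S\Phi_0$. This gives $\alpha\lambda_\alpha=-\mathcal C\lambda_\alpha+S\Phi_0$, which is \eqref{eq:dual_eq}. Since $\mathcal C$ is self-adjoint and positive semi-definite, $\bigl((\alpha+\mathcal C)\lambda,\lambda\bigr)\ge\alpha\|\lambda\|^2$. Lax--Milgram then gives unique solvability. Equivalently, Fenchel--Rockafellar duality yields the dual problem $\min_\lambda \tfrac12\|L^*\lambda\|^2+\tfrac\alpha2\|\lambda\|^2-(\lambda,S\Phi_0)$, whose optimality condition is the same equation.

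\textbf{Part (iii).} Compare with the exact control $g^*=g^*(\Phi_0)$, which satisfies $Lg^*+S\Phi_0=0$. Minimality gives $J_\alpha(g_\alpha)\le J_\alpha(g^*)=\tfrac12\|g^*\|^2$. This yields two bounds:
\[
\|g_\alpha\|\le\|g^*\|, \qquad \|\Xi_\alpha(0)\|^2\le\alpha\|g^*\|^2 .
\]
The second bound is the rate claimed in (iii). For strong convergence, note that $(g_\alpha)$ is bounded, so some subsequence converges weakly to a limit $\bar g$. Since $L$ is bounded, $L\bar g+S\Phi_0=\lim\Xi_\alpha(0)=0$, so $\bar g\in\mathcal U_{\rm ad}(\Phi_0)$. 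Weak lower semicontinuity gives $\|\bar g\|\le\liminf\|g_\alpha\|\le\|g^*\|$. Because $g^*$ is the unique minimal-norm element of the closed affine set $\mathcal U_{\rm ad}$, we get $\bar g=g^*$ and $\|g_\alpha\|\to\|g^*\|$. Weak convergence plus convergence of norms gives strong convergence, and uniqueness of the limit upgrades this to the whole family.

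\textbf{Part (iv).} Expect this to be the main obstacle, because one must pass to the limit in the bulk and surface adjoint states, not only in the controls. By linearity, $\Xi_\alpha-\Xi^*=\Xi[g_\alpha-g^*,0]$. The standard parabolic energy estimate for the time-reversed bulk--surface system then gives
\[
\|\Xi_\alpha-\Xi^*\|_{C([0,T_0];\mathcal H)}\le C\|g_\alpha-g^*\|\to0 .
\]
So $\xi_\alpha\to\xi^*$ in $L^2(\Omega\times(0,T_0))$ and $\pi_\alpha\to\pi^*$ in $L^2(\Gamma\times(0,T_0))$. For $f,f_\Gamma\in L^2$, the functional \eqref{eq:Rfunctional} evaluated at $\Phi_0$, which equals $\langle R_\alpha,\Phi_0\rangle$, therefore converges to the right-hand side of \eqref{eq:repr}. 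Theorem~\ref{thm:repr} identifies that limit as $(U(T_0),\Phi_0)_{\mathcal H}$.

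Alternatively, one can repeat the integration by parts of Theorem~\ref{thm:repr} with $\Xi_\alpha$ in place of $\Xi^*$. This gives the exact identity
\[
\langle R_\alpha,\Phi_0\rangle=(U(T_0),\Phi_0)_{\mathcal H}-(U(0),\Xi_\alpha(0))_{\mathcal H},
\]
whose defect term is bounded by $\|U_0\|\,\alpha^{1/2}\|g^*\|$ by (iii). This route gives both the convergence and an explicit rate.

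The weak point is the sign convention for $g$ in \eqref{eq:adjoint} versus \eqref{eq:repr}: the data term may come out as $\int h\,g$ rather than $-\int h\,g$. I would track this sign carefully against Lemma~\ref{lem:Lstar}.
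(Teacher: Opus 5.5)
Your proposal is correct and follows the paper's proof essentially step by step: (i) and (iii) match it up to notation, your direct substitution of $g_\alpha=-L^*\lambda_\alpha$ into $\alpha\lambda_\alpha=Lg_\alpha+S\Phi_0$ in (ii) is simply a more elementary stand-in for the paper's appeal to Fenchel--Rockafellar duality, and your primary route in (iv) is exactly the paper's subtraction of \eqref{eq:Rfunctional} from \eqref{eq:repr} combined with continuous dependence of \eqref{eq:adjoint} on the control. Your alternative identity $\langle R_\alpha,\Phi_0\rangle_{\mathcal H}=(U(T_0),\Phi_0)_{\mathcal H}-(U_0,\Xi_\alpha(0))_{\mathcal H}$ is also correct, and the sign you worried about checks out: the forcing $-g\,\mathbf{1}_\omega$ in \eqref{eq:adjoint} produces $+\int_0^{T_0}\!\int_\omega h\,g$, consistent with Lemma~\ref{lem:Lstar}. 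It is moreover a genuine improvement, since it uses only the residual bound of (iii) rather than strong convergence of $g_\alpha$ and, together with \eqref{eq:null_bound}, even yields the norm estimate $\|R_\alpha-U(T_0)\|_{\mathcal H}\le C_{\rm null}\,\|U_0\|_{\mathcal H}\,\alpha^{1/2}$.
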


\begin{proof}
\emph{(i)} $J_\alpha$ is strictly convex, continuous and coercive on
$L^2(0,T_0;L^2(\omega))$, so the direct method yields a unique minimizer
$g_\alpha$. Its G\^ateaux derivative is
$$
\langle J_\alpha'(g_\alpha), \delta g \rangle
= \int_0^{T_0}\!\!\!\int_\omega g_\alpha\, \delta g\, dx\, dt
+ \tfrac{1}{\alpha}\, (\Xi_\alpha(0), \delta\Xi(0))_{\mathcal{H}},
$$
with $\delta\Xi = \Xi[\delta g, 0]$. By Lemma~\ref{lem:Lstar} applied with
$\zeta = \Xi_\alpha(0)$,
$$
(\Xi_\alpha(0), \delta\Xi(0))_{\mathcal{H}}
= - \int_0^{T_0}\!\!\!\int_\omega y\, \delta g\, dx\, dt,
\qquad Y(0) = \Xi_\alpha(0).
$$
Setting $\widetilde\Phi_\alpha := \alpha^{-1} Y$ (so $\alpha\,
\widetilde\Phi_\alpha(0) = \Xi_\alpha(0)$ and $y = \alpha\,
\widetilde\phi_\alpha$), the Euler--Lagrange equation
$J_\alpha'(g_\alpha) = 0$ becomes $g_\alpha = \widetilde\phi_\alpha|_\omega$,
i.e.\ \eqref{eq:opt_pair}.

\emph{(ii)} Write $J_\alpha(g) = F(g) + G(L g)$ with $F(g) = \tfrac12
\|g\|^2$ and $G(q) = \tfrac{1}{2\alpha} \| q + S\Phi_0\|_{\mathcal{H}}^2$.
Since $F, G$ are convex, continuous and the problem is unconstrained,
Fenchel--Rockafellar duality holds with no gap, and the unique minimizer
$g_\alpha$ satisfies $g_\alpha = - L^* \lambda_\alpha$, where the dual
optimum $\lambda_\alpha \in \mathcal{H}$ minimizes the (strictly convex)
dual functional
$$
\mathcal{D}_\alpha(\lambda) := \tfrac12 \| L^* \lambda \|_{L^2}^2
+ \tfrac{\alpha}{2} \|\lambda\|_{\mathcal{H}}^2
- (\lambda, S\Phi_0)_{\mathcal{H}} .
$$
Its Euler equation is precisely \eqref{eq:dual_eq}. Substituting
$g_\alpha = - L^*\lambda_\alpha$ into \eqref{eq:opt_pair} identifies
$\lambda_\alpha = \widetilde\Phi_\alpha(0)$: indeed $L^* \lambda_\alpha
= - \widetilde\phi_\alpha|_\omega$ by Lemma~\ref{lem:Lstar}, so
$g_\alpha = \widetilde\phi_\alpha|_\omega$ as in (i). Since $\mathcal{C}$ is
positive semi-definite, $\alpha\,\mathrm{Id} + \mathcal{C}$ is self-adjoint
and $\geq \alpha\,\mathrm{Id}$; Lax--Milgram gives the unique solution of
\eqref{eq:dual_eq}.

\emph{(iii)} The exact null-control $g^*(\Phi_0) \in
\mathcal{U}_{\rm ad}(\Phi_0)$ has zero penalty in \eqref{eq:Jalpha}
(since $\Xi^*(0) = 0$), so $J_\alpha(g_\alpha) \leq J_\alpha(g^*) =
\tfrac12 \|g^*\|^2$. This yields both $\|g_\alpha\| \leq \|g^*\|$ and
$\tfrac{1}{\alpha}\|\Xi_\alpha(0)\|_{\mathcal{H}}^2 \leq \|g^*\|^2$, the
latter being the stated rate. Up to a subsequence $g_\alpha \rightharpoonup
\tilde g$; by continuous dependence $\Xi_\alpha(0) \to 0$, so $\tilde g \in
\mathcal{U}_{\rm ad}(\Phi_0)$, and lower semicontinuity gives
$\|\tilde g\| \leq \liminf \|g_\alpha\| \leq \|g^*\|$. By minimality of
$g^*$, $\tilde g = g^*$, and norm convergence upgrades weak to strong
convergence.

\emph{(iv)} Subtracting \eqref{eq:Rfunctional} from the exact
representation \eqref{eq:repr},
$$
(U(T_0), \Phi_0)_{\mathcal{H}} - \langle R_\alpha, \Phi_0 \rangle
= \int_0^{T_0}\!\!\!\int_\omega h\, (g^* - g_\alpha)
+ \int_0^{T_0}\!\!\!\int_\Omega f\, (\xi^* - \xi_\alpha)
+ \int_0^{T_0}\!\!\!\int_\Gamma f_\Gamma\, (\pi^* - \pi_\alpha) .
$$
By (iii), $g_\alpha \to g^*$ in $L^2(\omega\times(0,T_0))$, and by
continuous dependence of \eqref{eq:adjoint} on its control,
$(\xi_\alpha, \pi_\alpha) \to (\xi^*, \pi^*)$ in
$C([0,T_0];\mathcal{H})$. Each term vanishes as $\alpha \to 0^+$.
\end{proof}

Theorem~\ref{thm:penalized} prescribes a constructive scheme: for each
$\Phi_k$ in an $\mathcal{H}$-orthonormal basis, solve the well-posed normal
equation \eqref{eq:dual_eq} for $\lambda_\alpha(\Phi_k)$, recover the
control $g_\alpha(\Phi_k) = - L^* \lambda_\alpha(\Phi_k) =
\widetilde\phi_\alpha|_\omega$, and read off the coefficient
$\langle R_\alpha, \Phi_k \rangle$ from \eqref{eq:Rfunctional}. As
$\alpha \to 0$, the assembled state $\sum_k \langle R_\alpha, \Phi_k\rangle
\Phi_k$ converges to $U(T_0)$.

\begin{remark}
\label{rmk:exact_vs_pen}
The exact reconstruction operator of Definition~\ref{def:Lambda} factors as $\Lambda = S\,\mathcal{C}^{-1} S$, with $\mathcal{C}^{-1}$ well-defined on $\operatorname{ran}(S)$ by the observability estimate of Proposition~\ref{prop:obs}. The penalized scheme regularizes the unbounded inverse, $\mathcal{C}^{-1} \mapsto (\alpha\,\mathrm{Id} + \mathcal{C})^{-1}$, which is exactly \eqref{eq:dual_eq}; correspondingly $g_\alpha = - L^* (\alpha\,\mathrm{Id} + \mathcal{C})^{-1} S\Phi_0 \to g^*(\Phi_0)$. This is the abstract counterpart of assembling $\boldsymbol{\mathcal{C}}_h$ directly as a Gram matrix and tempering it by the penalty $\alpha I$ in Section~\ref{sec:discretization}.
\end{remark}

\section{Discretization}\label{sec:discretization}

We discretize \eqref{eq:state} by combining (i) a $P_1$ finite-element spatial discretization on a regular triangulation of $\Omega$ with mesh size $h$; (ii) a Lie splitting in time with step $\tau$, separating the bulk diffusion step from the surface dynamics, following the framework of \cite{MR4568436}; (iii) a Galerkin reduction onto the first $N_E$ eigenmodes of the bulk--surface Laplacian.

\subsection{Spatial discretization} \label{ssec:disc_space}

Let $\mathcal{T}_h$ be a shape-regular triangulation of $\overline{\Omega}$ with mesh size $h$, and let $V_h \subset H^1(\Omega)$ be the standard $P_1$ Lagrange space. We split the nodes of $\mathcal{T}_h$ into interior nodes $\mathcal{N}_\Omega^{\rm int}$ and boundary nodes $\mathcal{N}_\Gamma$, of cardinalities $n_\Omega^{\rm int}$ and $n_\Gamma$ respectively. The trace coupling $u|_\Gamma = p$ implies that the surface DOFs $p_h$ coincide with the bulk DOFs restricted to $\mathcal{N}_\Gamma$; we therefore use the augmented block vector 
\begin{equation*}
W_h^n = (u_h^n, p_h^n) \in \R^{n_\Omega^{\rm int}} \times \R^{n_\Gamma},
\end{equation*}
with the implicit identification $p_h^n = u_h^n|_{\mathcal{N}_\Gamma}$. We introduce the mass and stiffness matrices in block form
\begin{equation}
\label{eq:matrices}
\mathbf{M} \,=\, \begin{pmatrix} M_\Omega & 0 \\ 0 & \beta\, M_\Gamma \end{pmatrix},
\qquad
\mathbf{K} \,=\, \begin{pmatrix} K_\Omega & K_{\Omega\Gamma} \\
K_{\Omega\Gamma}^{\!\top} & K_\Gamma \end{pmatrix},
\end{equation}
where $M_\Omega, K_\Omega$ are the standard bulk FE matrices, $M_\Gamma, K_\Gamma$ are their surface analogues (involving $\Delta_\Gamma$), and $K_{\Omega\Gamma}$ encodes the bulk--surface coupling through the conormal derivative. The factor $\beta$ in $\mathbf{M}$ reflects the energy inner product \eqref{eq:Hinner}: $W \mapsto W^\top \mathbf{M} W$ is the discrete analogue of $\|U\|_{\mathcal{H}}^2$.

The semi-discrete problem reads
\begin{equation}
\label{eq:semidiscrete}
\mathbf{M}\, \dot W_h(t) + \mathbf{K}\, W_h(t) = F_h(t),
\qquad W_h(0) = W_{h,0},
\end{equation}
with $F_h(t) := (M_\Omega f_h(t), M_\Gamma f_{\Gamma,h}(t))$.

\subsection{Lie splitting in time} \label{ssec:lie}

To decouple the bulk and surface stiffness contributions while preserving the implicit treatment of diffusion, we split $\mathbf{K} = \mathbf{K}_\Omega + \mathbf{K}_\Gamma$, where \begin{equation*}
\mathbf{K}_\Omega \,=\, \begin{pmatrix} K_\Omega & K_{\Omega\Gamma} \\
K_{\Omega\Gamma}^{\!\top} & 0 \end{pmatrix},
\qquad
\mathbf{K}_\Gamma \,=\, \begin{pmatrix} 0 & 0 \\ 0 & K_\Gamma \end{pmatrix},
\end{equation*}
and apply a first-order operator splitting
\cite{MR4568436}: given $W_h^n$, compute the half-step $W_h^{n+1/2}$ by integrating the bulk part with backward Euler over $[t^n, t^{n+1}]$,
\begin{equation}
\label{eq:lie_A}
(\mathbf{M} + \tau \mathbf{K}_\Omega)\, W_h^{n+1/2} = \mathbf{M}\, W_h^n + \tau\, F_{\Omega,h}^n,
\end{equation}
followed by the surface part,
\begin{equation}
\label{eq:lie_B}
(\mathbf{M} + \tau \mathbf{K}_\Gamma)\, W_h^{n+1}
= \mathbf{M}\, W_h^{n+1/2} + \tau\, F_{\Gamma,h}^n,
\end{equation}
where $F_{\Omega,h}^n = (M_\Omega f_h^n, 0)$ and $F_{\Gamma,h}^n = (0, M_\Gamma f_{\Gamma,h}^n)$. We denote the resulting one-step propagator by 
$$\Psi_\tau: \R^{n_\Omega^{\rm int} + n_\Gamma} \to \R^{n_\Omega^{\rm int} + n_\Gamma},\quad \Psi_\tau W_h^n := W_h^{n+1},$$ 
and the discrete trajectory by $W_h^n = \Psi_\tau^n W_{h,0}$ (with the source absorbed in the inhomogeneous form when $F_h \not\equiv 0$).

\begin{proposition}[Convergence of Lie splitting]
\label{prop:lie_conv}
Under the regularity assumptions of \cite{MR4568436}, the splitting scheme \eqref{eq:lie_A}--\eqref{eq:lie_B} converges to the solution of \eqref{eq:semidiscrete}: there exists $C > 0$ independent of $\tau, h$ such that for $t^n = n\tau \leq T_0$,
\begin{equation}
\label{eq:lie_error}
\|W_h(t^n) - W_h^n\|_{\mathbf{M}} \leq C\, \tau\, (1 + \|W_{h,0}\|_{\mathbf{K}}).
\end{equation}
\end{proposition}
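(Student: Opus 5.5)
The plan is to derive \eqref{eq:lie_error} as the standard ``consistency plus stability'' estimate for a first-order splitting, carried out in the energy norm $\|\cdot\|_{\mathbf{M}}$ and following the structure of \cite{MR4568436}. Introduce the exact semi-discrete flow $E(\tau) := e^{-\tau \mathbf{M}^{-1}\mathbf{K}}$ of \eqref{eq:semidiscrete}, in the homogeneous case first and with Duhamel's formula added for $F_h$. The one-step propagator is
$$\Psi_\tau = (\mathbf{M}+\tau\mathbf{K}_\Gamma)^{-1}\mathbf{M}\,(\mathbf{M}+\tau\mathbf{K}_\Omega)^{-1}\mathbf{M}.$$
The global error $e^n := W_h(t^n) - W_h^n$ then obeys the recursion
$$e^{n+1} = \Psi_\tau e^n + \delta^{n+1}, \qquad \delta^{n+1} := \bigl(E(\tau)-\Psi_\tau\bigr) W_h(t^n),$$
and a discrete Gronwall argument reduces the claim to two facts: a stability bound for $\Psi_\tau$, and a local error bound $\|\delta^{n+1}\|_{\mathbf{M}} \lesssim \tau^2 \times (\text{a norm of } W_h(t^n))$ that is summable up to $t^n\le T_0$.

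\emph{Stability.} Both $\mathbf{K}_\Omega$ and $\mathbf{K}_\Gamma$ are symmetric. $\mathbf{K}_\Gamma$ is positive semi-definite, and I will argue that $\mathbf{K}_\Omega$ is as well: the coupling block $K_{\Omega\Gamma}$ together with the boundary-node part of the bulk stiffness is what the full bulk Dirichlet form contributes, which I would regroup into $\mathbf{K}_\Omega$. Granting this, each backward Euler substep $(\mathbf{M}+\tau\mathbf{K}_\bullet)^{-1}\mathbf{M}$ is a contraction in $\|\cdot\|_{\mathbf{M}}$, so $\|\Psi_\tau\|_{\mathbf{M}}\le 1$. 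Summing the recursion gives $\|e^n\|_{\mathbf{M}} \le \sum_{k\le n}\|\delta^k\|_{\mathbf{M}}$.

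\emph{Local error.} Expand both $E(\tau)$ and $\Psi_\tau$ to first order; both equal $I - \tau\mathbf{M}^{-1}(\mathbf{K}_\Omega+\mathbf{K}_\Gamma) + \mathcal O(\tau^2)$. The defect is therefore governed by second-order terms, chiefly the commutator-type product $\tau^2\mathbf{M}^{-1}\mathbf{K}_\Gamma\mathbf{M}^{-1}\mathbf{K}_\Omega$. I will not bound this term by matrix norms, since these blow up as $h\to0$. Instead I will use the parabolic smoothing of $E(t)$ together with the regularity assumptions of \cite{MR4568436}, so that $\tau\sum_k \|\mathbf{M}^{-1}\mathbf{K}_\bullet W_h(t^k)\|$-type quantities are bounded in terms of $\|W_{h,0}\|_{\mathbf{K}}$, uniformly in $h$. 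This uses the energy estimate $\int_0^{T_0}\|\mathbf{M}^{-1}\mathbf{K}W_h\|_{\mathbf{M}}^2 \lesssim \|W_{h,0}\|_{\mathbf{K}}^2$ plus source terms, which is where the $1$ in $(1+\|W_{h,0}\|_{\mathbf{K}})$ comes from once the data $F_h$ are normalized.

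The main obstacle is precisely this $h$-uniform control of the splitting defect. It requires bounding $\mathbf{K}_\Gamma$ applied to bulk-resolvent outputs, i.e., surface regularity of the trace of the bulk step, with nothing that depends on the inverse inequality. I would handle it by importing the corresponding lemma of \cite{MR4568436} as a black box: it gives $\tau\sum_n$ of the local defects $\le C\tau(1+\|W_{h,0}\|_{\mathbf{K}})$ via a telescoping and summation-by-parts argument. What I would verify myself is that our block splitting and the $\beta$-weighted mass $\mathbf{M}$ satisfy its hypotheses, namely symmetry, semi-definiteness, and compatibility of the trace identification $p_h=u_h|_{\mathcal N_\Gamma}$.
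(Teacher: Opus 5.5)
The paper gives no argument for this proposition beyond citing \cite{MR4568436} and reporting a numerical check, so your outline has to stand on its own. Both of its pillars fail as written.

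\emph{Stability.} The paper's $\mathbf{K}_\Omega$ has a zero $(2,2)$ block and a nonzero coupling block $K_{\Omega\Gamma}$, so it is indefinite: evaluate its quadratic form at $(x,-tK_{\Omega\Gamma}^{\top}x)$ with $K_{\Omega\Gamma}^{\top}x\neq0$ and $t$ large. The bulk substep $(\mathbf{M}+\tau\mathbf{K}_\Omega)^{-1}\mathbf{M}$ is therefore not an $\mathbf{M}$-contraction, and $\mathbf{M}+\tau\mathbf{K}_\Omega$ can even be singular. Moving the boundary--boundary bulk stiffness from $\mathbf{K}_\Gamma$ into $\mathbf{K}_\Omega$ does not show $\mathbf{K}_\Omega\succeq0$. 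It changes $\Psi_\tau$, so you would be proving the estimate for a different scheme than \eqref{eq:lie_A}--\eqref{eq:lie_B}. That scheme is also not, as far as I can tell, the PDAE-based bulk--surface splitting analysed in \cite{MR4568436}. So results from that paper do not transfer merely by checking symmetry and semi-definiteness.

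\emph{Consistency.} The local defect is quadratic in $\tau\mathbf{M}^{-1}\mathbf{K}_\bullet$: it consists of the commutator term plus the $\mathcal O(\tau^2)$ remainder of each implicit Euler substep against the exponential. Getting $\|\delta^{k}\|_{\mathbf{M}}\lesssim\tau^2$ uniformly in $h$ therefore needs two powers of the operator applied to $W_h(t^{k})$, while your energy estimate controls only one. With parabolic smoothing you get at best $\tau^2\sum_k (t^k)^{-3/2}\|W_{h,0}\|_{\mathbf{K}}\sim\tau^{1/2}\|W_{h,0}\|_{\mathbf{K}}$.

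This loss is genuine, not an artefact of the bookkeeping. Your argument applies verbatim to the degenerate split $\mathbf{K}_\Gamma=0$, i.e.\ plain implicit Euler. There, take $W_{h,0}=\mu^{-1/2}\Phi$ with $\mathbf{K}\Phi=\mu\mathbf{M}\Phi$, $\|\Phi\|_{\mathbf{M}}=1$ and $\mu\tau\approx1$; such $\mu$ exist once $h^2\lesssim\tau$. Then $\|W_{h,0}\|_{\mathbf{K}}=1$, but the one-step error is $\approx|e^{-1}-\tfrac12|\,\tau^{1/2}$, which exceeds $2C\tau$ for small $\tau$. Hence \eqref{eq:lie_error} can only hold with $C$ depending on higher-order regularity of the solution.

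For the same reason, the black box cannot deliver what you attribute to it. As I understand it, \cite{MR4568436} proves first order only under a weak CFL condition $\tau\le ch$, with constants depending on smooth norms of the exact solution. That is exactly the inverse-estimate dependence you set out to avoid. A correct argument must state these hypotheses explicitly and then actually bound $\mathbf{K}_\Gamma$ applied to bulk-step outputs, which is the step you left open.
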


This is \cite{MR4568436} adapted to our notation. We verified \eqref{eq:lie_error} numerically and observe the predicted linear convergence in $\tau$.

\subsection{Bulk--surface eigenvalue problem and reduced basis} \label{ssec:eigen}

The discrete Gramian $\boldsymbol{\mathcal C}_h$ is most naturally represented in the basis of eigenvectors of the bulk--surface Laplacian. Consider the generalized eigenvalue problem
\begin{equation}
\label{eq:eigenproblem}
\mathbf{K}\, \Phi_k \,=\, \mu_k\, \mathbf{M}\, \Phi_k,
\qquad k = 0, 1, 2, \dots,
\end{equation}
with $0 = \mu_0 < \mu_1 \leq \mu_2 \leq \cdots$ (the zero eigenvalue corresponds to constants, which we skip). The eigenvectors are $\mathbf{M}$-orthonormal: $\Phi_k^{\!\top} \mathbf{M}\, \Phi_j = \delta_{kj}$. Truncating to the first $N_E \geq 1$ non-zero modes, we form the reduced basis matrix
\begin{equation*}
\mathbf{V} := [\Phi_1, \dots, \Phi_{N_E}]
\in \R^{(n_\Omega^{\rm int} + n_\Gamma) \times N_E},
\qquad \mathbf{V}^{\!\top} \mathbf{M}\, \mathbf{V} = I_{N_E}.
\end{equation*}
Any reduced-basis vector $\mathbf{c} \in \R^{N_E}$ lifts to a full-space vector $W = \mathbf{V} \mathbf{c}$.

\begin{remark}
\label{rmk:eigen_conv}
For a $C^{1,1}$ domain $\Omega$, the eigenvalues $\mu_k^h$ of the discrete problem \eqref{eq:eigenproblem} converge to those of the continuous bulk--surface Laplacian with rate $h^2$ on the lower modes \cite{MR2743037}. The truncation to $N_E$ modes introduces a spectral truncation error of order $N_E^{-s/d}$, where $s$ measures the Sobolev regularity of $U(T_0)$; see Section \ref{ssec:err_analysis}.
\end{remark}

\subsection{The discrete controllability Gramian \texorpdfstring{$\boldsymbol{\mathcal{C}}_h$}{Ch}} \label{ssec:Lambda_h}

Following the dual formulation of Theorem~\ref{thm:penalized}, the only operator that must be assembled is the controllability Gramian $\mathcal{C} = L L^*$ of Lemma~\ref{lem:Lstar}, represented on the reduced basis. We build it directly as a Gram matrix, which makes it symmetric and positive semi-definite \emph{by construction}.

For each mode $k = 1, \dots, N_E$ we perform a single discrete solve. Let $Y_h^{(k)} = (y_h^{(k)}, q_h^{(k)})$ be the \emph{source-free} forward solution of \eqref{eq:lie_A}--\eqref{eq:lie_B} ($f = 0$, $f_\Gamma = 0$) with initial datum $Y_h^{(k)}(0) = \Phi_k$, evaluated at $t^n = n\tau$, $n = 0, \dots, N$. By Lemma~\ref{lem:Lstar} its bulk trace on the observed nodes is the discrete adjoint action,
\begin{equation}
\label{eq:Lstar_disc}
L_h^* \Phi_k = -\, y_h^{(k)}\big|_\omega \in \R^{n_\omega \times (N+1)} .
\end{equation}
The Gramian entries are then the space--time $M_\omega$ inner products of these traces,
\begin{equation}
\label{eq:Cgram}
\bigl[ \boldsymbol{\mathcal{C}}_h \bigr]_{j,k}
:= \tau \sum_{n=0}^{N-1}
\bigl\langle\, y_h^{(j),n},\, y_h^{(k),n} \,\bigr\rangle_{M_\omega},
\qquad j, k = 1, \dots, N_E,
\end{equation}
where $M_\omega$ is the mass matrix restricted to the observed nodes. By \eqref{eq:Cgram}, $\boldsymbol{\mathcal{C}}_h = G^{\!\top} G$ for a suitable factor $G$, hence $\boldsymbol{\mathcal{C}}_h = \boldsymbol{\mathcal{C}}_h^{\!\top} \succeq 0$ exactly, in any arithmetic up to round-off.

\begin{remark}[Semi-discrete closed form, as a verification]
\label{rmk:Cgram_check}
Before time splitting, $y_h^{(k)}(t) = e^{-\mu_k t}\Phi_k$ in the $\mathbf{M}$-orthonormal eigenbasis \eqref{eq:eigenproblem}, so \eqref{eq:Cgram} reduces to
\begin{equation*}
\bigl[ \boldsymbol{\mathcal{C}} \bigr]_{j,k} = \frac{1 - e^{-(\mu_j + \mu_k) T_0}}{\mu_j + \mu_k}\, (\Phi_j, \Phi_k)_{L^2(\omega)} .
\end{equation*}
We use this expression to validate the assembled $\boldsymbol{\mathcal{C}}_h$ against the splitting solver: the two agree to $\mathcal{O}(\tau)$, the splitting consistency order
(Proposition~\ref{prop:lie_conv}).
\end{remark}

\paragraph{On a posteriori symmetrization.}
In contrast to a construction based on the forward map $\Phi_k \mapsto Z_h(T_0)$ of Definition~\ref{def:Lambda}'s discarded variant---which is not self-adjoint already at the continuous level (Remark~\ref{rmk:Lambda_sym}) and would carry an $\mathcal{O}(1)$ antisymmetric part---the Gram form \eqref{eq:Cgram} is symmetric intrinsically; the residual asymmetry is at the level of floating-point round-off $\mathcal O(\varepsilon_{\rm mach})$ and we symmetrize it away by the Euclidean average $\tfrac12(\boldsymbol{\mathcal C}_h+\boldsymbol{\mathcal C}_h^\top)$.

\subsection{Discrete penalized reconstruction} \label{ssec:reconstructor_disc}

The discrete scheme mirrors the dual equation \eqref{eq:dual_eq}, not the discarded primal form. We assemble two further reduced-basis objects.

\paragraph{Propagator and data vector.}
Let $\mathbf{S} \in \R^{N_E \times N_E}$ represent the homogeneous adjoint propagator $S$ of Section \ref{ssec:penalized} on the reduced basis, $\mathbf{S}_{j,k} = (\Phi_j, \mathbf{M}\, \Psi_\tau^{N}\!\Phi_k)$; in the eigenbasis it is diagonal up to the splitting error,
$\mathbf{S} = \operatorname{diag}(e^{-\mu_k T_0}) + \mathcal{O}(\tau)$. For the data, we use linearity to subtract the known source contribution: writing $U = U_F + U^{\rm hom}$, where $U_F$ solves \eqref{eq:state} with the (known) sources $F$ and zero initial datum, and $U^{\rm hom}$ carries the unknown initial datum with no source, we set $\tilde z^n := u_{F,h}^n|_\omega$ (computable) and form
\begin{equation}
\label{eq:bvec}
\mathbf{b}_j := \tau \sum_{n=0}^{N-1}
\bigl\langle\, h^n - \tilde z^n,\, y_h^{(j),n} \,\bigr\rangle_{M_\omega},
\qquad j = 1, \dots, N_E,
\end{equation}
with $y_h^{(j)}$ the source-free trajectories of Section \ref{ssec:Lambda_h}. The difference $h^n - \tilde z^n$ is the observation of the homogeneous part $u^{\rm hom}$, consistent with the observation term of \eqref{eq:repr}.

\paragraph{The penalized system.}
Given $\alpha > 0$, the reduced reconstruction is obtained from the single symmetric positive definite system
\begin{equation}
\label{eq:disc_system}
\bigl( \alpha\, I_{N_E} + \boldsymbol{\mathcal{C}}_h \bigr)\, \mathbf{z}
= \mathbf{b},
\qquad
\mathbf{c} = \mathbf{S}^{\!\top} \mathbf{z}.
\end{equation}
Indeed, by the dual equation \eqref{eq:dual_eq} the penalized control of mode $\Phi_k$ has reduced coordinates $\boldsymbol{\lambda}_k = (\alpha I + \boldsymbol{\mathcal{C}}_h)^{-1}\mathbf{S}\,\mathbf{e}_k$, and the reconstruction coefficient
$\mathbf{c}_k = \langle R_\alpha, \Phi_k\rangle = \mathbf{b}^{\!\top} \boldsymbol{\lambda}_k$ of \eqref{eq:Rfunctional}; by symmetry of $\alpha I + \boldsymbol{\mathcal{C}}_h$ this collects into
\eqref{eq:disc_system}. The reconstructed state at $T_0$ is
\begin{equation}
\label{eq:rec_disc}
U_h^{\rm rec}(T_0) = U_{F,h}(T_0) + \mathbf{V} \mathbf{c},
\end{equation}
the first term restoring the known source-driven part.

\paragraph{Cost and conditioning.}
$\boldsymbol{\mathcal{C}}_h$ is a compact-type Gramian: its eigenvalues inherit the decay of the heat semigroup, $\lambda_{\max} = \mathcal{O}(1)$ on the lowest modes (bounded by $C_{\rm null}^2$, cf.\ \eqref{eq:null_bound}) while $\lambda_{\min} \to 0$ on the high modes, which is the analytic signature of the ill-posed inverse. Consequently
\begin{equation*}
\kappa\bigl( \alpha I_{N_E} + \boldsymbol{\mathcal{C}}_h \bigr) = \frac{\alpha + \lambda_{\max}}{\alpha + \lambda_{\min}} = \mathcal{O}\!\left( 1 + \alpha^{-1} \right),
\end{equation*}
so the penalty $\alpha$ is precisely what tempers the conditioning, in the classical Tikhonov trade-off; the system is SPD for every $\alpha > 0$ and no lower bound on $\alpha$ is needed for solvability. We solve \eqref{eq:disc_system} by one Cholesky factorization at cost $\mathcal{O}(N_E^3)$. The dominant cost is the assembly of $\boldsymbol{\mathcal{C}}_h$ via \eqref{eq:Cgram}, requiring only $N_E$ forward time-marches, each $\mathcal{O}(N\,(n_\Omega^{\rm int} + n_\Gamma))$.

\subsection{Adaptive post-processing}

A key observation is that Tikhonov regularization $\mathbf{z} = (\alpha I + \boldsymbol{\mathcal{C}}_h)^{-1}\mathbf{b}$ (whence $\mathbf{c} = \mathbf{S}^{\!\top}\mathbf{z}$) uniformly attenuates all modes, including those whose mass lies inside $\omega$ where the observation $h$ is reliable. The clean-data post-processing step
\begin{equation}
\label{eq:postproc_clean}
u_h^{\rm post}(x) =
\begin{cases}
h(T_0, x), & x \in \omega, \\
(\mathbf{V} \mathbf{c})(x), & x \notin \omega,
\end{cases}
\end{equation}
removes this bias inside $\omega$ at zero cost. Under noise, however, the observation itself is corrupted, and the optimal weight $w_{\rm obs} \in [0,1]$ in
\begin{equation}
\label{eq:postproc_adaptive}
u_h^{\rm post}(x) =
\begin{cases}
w_{\rm obs}\, h(T_0,x) + (1-w_{\rm obs})\,(\mathbf{V}\mathbf{c})(x), & x \in \omega, \\
(\mathbf{V}\mathbf{c})(x), & x \notin \omega,
\end{cases}
\end{equation}
should decrease with the noise level. We propose to estimate the relative noise level from the data via the disagreement at $t = T_0$,
\begin{equation}
\label{eq:eta_hat}
\hat\eta := \frac{\|U_h^{\rm rec}(T_0) - h(T_0)\|_{L^2(\omega)}} {\|h(T_0)\|_{L^2(\omega)}},
\end{equation}
and to select $w_{\rm obs}$ from a three-regime rule:
\begin{equation}
\label{eq:auto_rule}
w_{\rm obs}(\hat\eta) =
\begin{cases}
1.0, & \hat\eta < 0.13, \\
0.5, & 0.13 \leq \hat\eta < 0.20, \\
0.0, & \hat\eta \geq 0.20.
\end{cases}
\end{equation}
The thresholds in \eqref{eq:auto_rule} are calibrated on a single $(r, n)$ configuration; we show in Section \ref{ssec:adaptive} that the resulting estimator matches the oracle (which has access to the true noise level) to within $0.1\,\%$ across more than three orders of magnitude in noise.

\subsection{A priori error bound}\label{ssec:err_analysis}

The total reconstruction error decomposes into four contributions: the penalty bias (Tikhonov), the spectral truncation, the spatial FE discretization, and the temporal splitting. We summarise the resulting bound as a heuristic.

\paragraph{Heuristic a priori error scaling.} 
Let $U(T_0) \in H^s(\Omega) \times H^s(\Gamma)$ for some $s > 0$, and assume a Hölder-type source condition holds: $U(T_0) \in \operatorname{ran}(\boldsymbol{\mathcal{C}}_h^\nu)$ for some $\nu \in (0, 1/2]$ with $\|\boldsymbol{\mathcal{C}}_h^{-\nu} U(T_0)\|_{\mathcal{H}} \leq \rho$. While a fully coupled rigorous error analysis is beyond the scope of this paper, we can establish a heuristic balance for the reconstruction $U_h^{\rm rec}(T_0)$ with clean data and penalty $\alpha > 0$. The total error splits into four standard contributions:

\begin{enumerate}
\item \emph{Penalty bias:} By Theorem~\ref{thm:penalized}(iii) the penalized control satisfies $\|\Xi_\alpha(0)\|_{\mathcal{H}} \leq \alpha^{1/2} \|g^*\|$. Under a Hölder source condition this residual rate transfers to the reconstruction in $\mathcal{H}$ as $\alpha^{1/4}$, the standard qualification-$1/2$ rate for Tikhonov regularization \cite{MR1408680}.
\item \emph{Spectral truncation:} Projection onto the first $N_E$ eigenmodes costs $\mathcal{O}(N_E^{-s/d})$ in $\mathcal{H}$ for $U(T_0) \in H^s$.
\item \emph{Spatial discretization:} TThe $P_1$ energy-norm error is $\mathcal{O}(h^2)$; each source-free trajectory $y_h^{(k)}$, and hence $\boldsymbol{\mathcal{C}}_h$, inherits this rate.

\item \emph{Temporal splitting:} By Proposition~\ref{prop:lie_conv} the splitter is consistent to $\mathcal{O}(\tau)$; propagated through $(\alpha I + \boldsymbol{\mathcal{C}}_h)^{-1}$ with the heuristic stability constant $\alpha^{-1/2}$ this yields $\tau\,\alpha^{-1/2}$.
\end{enumerate}

Summing these four classical rates yields the heuristic a priori bound:
\begin{equation}
\label{eq:err_total}
\| U(T_0) - U_h^{\rm rec}(T_0) \|_{\mathcal{H}}
\;\lesssim\; \alpha^{1/4} + N_E^{-s/d} + h^2 + \tau\, \alpha^{-1/2}.
\end{equation}

\begin{remark}[On the status of \eqref{eq:err_total}]
We stress that \eqref{eq:err_total} is a heuristic balance, not a proven
bound: each term is the textbook rate for the corresponding error source
(Tikhonov qualification-$1/2$ \cite{MR1408680}, spectral truncation,
$P_1$ energy estimate, Lie-splitting consistency of
Proposition~\ref{prop:lie_conv}), but the constants are not tracked across
the coupling and the source condition is assumed, not verified. We retain
\eqref{eq:err_total} only as a guide to the parameter scaling
$\alpha_{\rm opt}\sim\tau^{4/3}$ (Remark~\ref{rmk:beta_opt}), which the
experiments corroborate.
\end{remark}

\begin{remark}
\label{rmk:beta_opt}
The penalty and temporal terms balance when $\alpha^{1/4} \sim \tau\, \alpha^{-1/2}$, i.e.\ $\alpha_{\rm opt} \sim \tau^{4/3}$, at which both are of order $\tau^{1/3}$; choosing $h$ and $N_E$ to match keeps the total at that order. The choice $\alpha_{\rm opt} \sim \tau^{4/3}$ leaves the system well-conditioned, $\kappa = \mathcal{O}(\tau^{-4/3})$, so it is consistent with solvability for every $\alpha > 0$ (no $\alpha \gtrsim \tau$ constraint is needed). For $\tau = 5 \cdot 10^{-3}$ this predicts $\alpha_{\rm opt} \sim 10^{-3}$ and an error floor of order $10\,\%$, consistent with the plateau in test~4 (Section~\ref{ssec:adaptive}).
\end{remark}

\section{Numerical experiments}
\label{sec:numerics}

Throughout this section, $\Omega = (0,1)^2$, $\beta = 1$ (surface diffusion coefficient), $T_0 = 0.1$, $\tau = 5 \cdot 10^{-3}$, $N_E = 30$ (number of spectral modes), and the initial data is $u_0(x,y) = 2\sin(\pi x)\sin(\pi y)$ unless stated otherwise. Twin experiments are used throughout: the "ground truth" is generated by forward simulation, the observation $h$ is extracted on $\omega$, and the reconstruction is compared against the simulated truth at $T_0$ in the mass-weighted norm $\|\cdot\|_{\mathbf{M}}$.

\subsection{Visualization at the optimal setup} \label{ssec:viz}

Before turning to quantitative sensitivity studies, we display in Figure~\ref{fig:viz} the reconstruction obtained at the optimal radius $r^* = 0.21$ (anticipated by Test~1 below) on the refined mesh $n = 32$, with full post-processing ($w_{\rm obs} = 1$) and $\alpha = 3 \cdot 10^{-2}$. The four panels show: (a) the initial datum $u_0^*$, which is unknown to the method; (b) the true final state $u^*(T_0)$, here significantly attenuated by diffusion; (c) the reconstructed state $u_h^{\rm rec}(T_0)$; (d) the pointwise error $|u^*(T_0) - u_h^{\rm rec}(T_0)|$. The post-processed reconstruction is visually indistinguishable from the truth, with relative error $3.30\,\%$ in the $\mathcal{H}$-norm. The error map reveals that the residual is essentially confined to the annular region near $\partial\omega$, consistent with the structural boundary effects of $L^2$ post-processing.

\begin{figure}[htbp]
\centering
\includegraphics[width=0.95\linewidth]{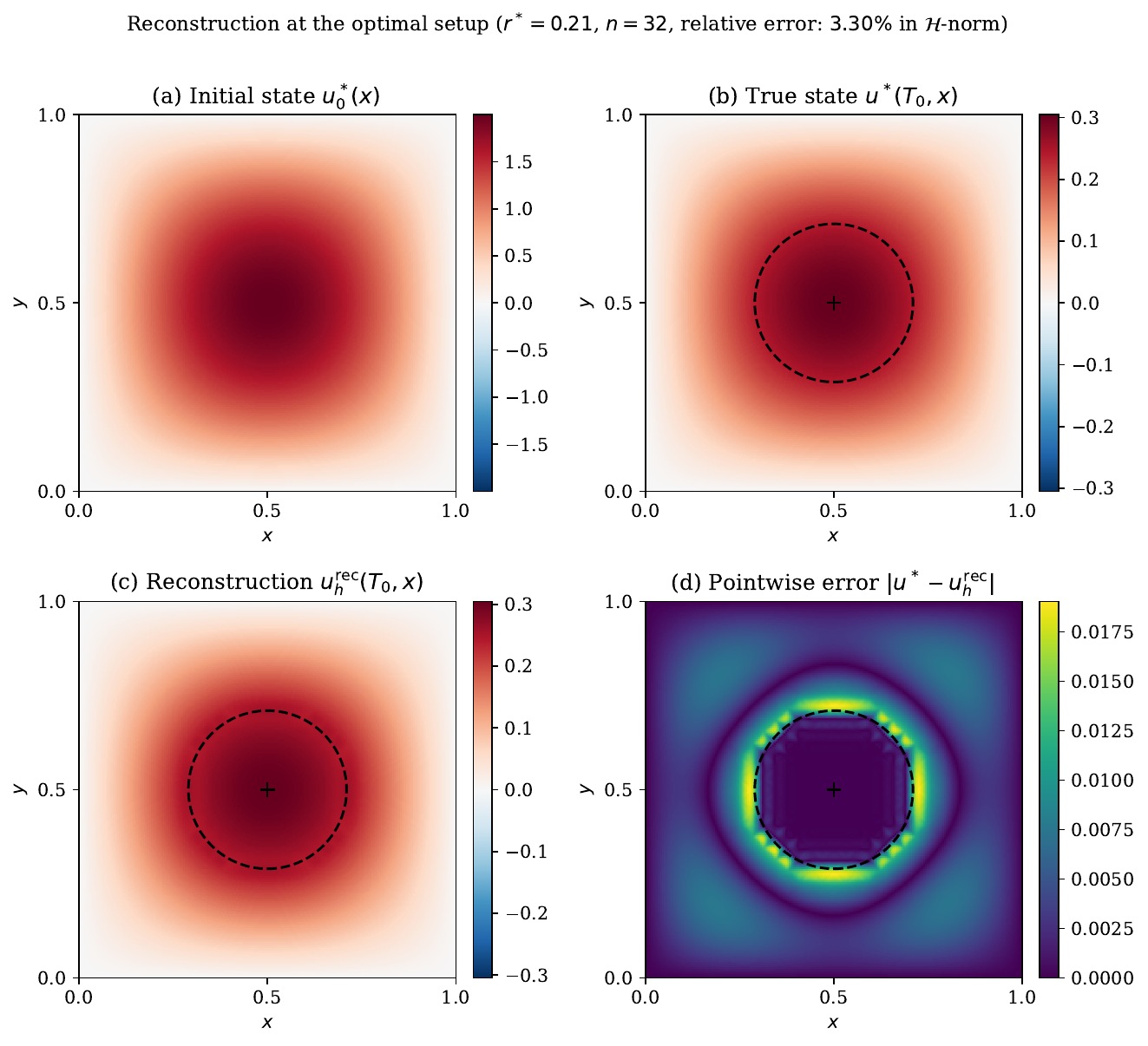}
\caption{Reconstruction at the optimal setup ($r^* = 0.21$, $n = 32$, $T_0 = 0.1$, $\alpha = 3 \cdot 10^{-2}$). (a) Initial datum $u_0^*$. (b) True state $u^*(T_0)$. (c) Reconstruction $u_h^{\rm rec}(T_0)$ with full post-processing. (d) Pointwise error $|u^* - u_h^{\rm rec}|$. The black dashed circle marks the observation domain $\omega$. The relative error in $\mathcal{H}$-norm is $3.30\,\%$.}
\label{fig:viz}
\end{figure}

\begin{remark} \label{rmk:eta_floor}
The same setup with the automatic post-processing rule \eqref{eq:auto_rule} yields a slightly larger error of $5.51\,\%$: the noise estimator \eqref{eq:eta_hat} returns $\hat\eta = 0.131$, so the rule conservatively selects the blend $w_{\rm obs} = 0.5$ rather than full post-processing. This reflects the inherent Tikhonov bias floor of $\hat\eta$ on clean data, whose precise value is configuration-dependent (here $\hat\eta = 0.131$ at $r^* = 0.21$, $n = 32$, versus $\sim 0.11$ for the noise-sweep setup of Section~\ref{ssec:adaptive}):$0.131$ falls just above the lower threshold $0.13$, which triggers the blend. In practice, when a clean-data regime can be identified a priori, the user may override the rule by forcing $w_{\rm obs} = 1$; otherwise, the small extra cost is the price of automatic noise robustness.
\end{remark}

\subsection{Test~1: optimal observatory radius} \label{ssec:radius}

Figure~\ref{fig:radius} reports the relative reconstruction error (post-processed) as a function of the radius of a centered disk observatory. A non-monotone profile is observed: error decreases sharply for $r \in [0.16, 0.21]$, reaches a minimum of $3.3\,\%$ at $r^* \approx 0.21$ on the finer mesh ($n = 32$), then \emph{worsens} until $r \approx 0.30$ before improving again for large radii. The phenomenon is mesh-independent. We attribute the resonance to an alignment of the disk with the support of the dominant eigenmode: the ratio ``information per area'' is maximized at $r^*$.

\begin{figure}[htbp]
\centering
\includegraphics[width=0.85\linewidth]{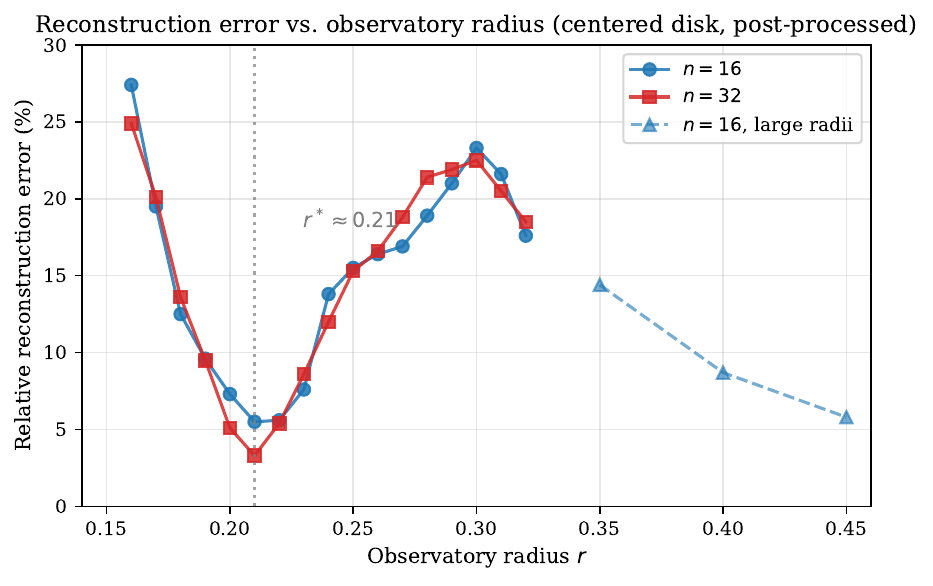}
\caption{Reconstruction error vs.~observatory radius for $\Omega = (0,1)^2$, $T_0 = 0.1$, $\beta = 1$. The error reaches a sharp minimum near $r^* \approx 0.21$ on both meshes, then exhibits a "hump" before recovering at large radii.}
\label{fig:radius}
\end{figure}

\subsection{Test~2: observatory position sensitivity} \label{ssec:position}

Figure~\ref{fig:position} shows that, for a symmetric initial condition, the central position is more than four times better than the corner-ward position with identical observatory size. The result confirms that the information content of $\omega$ is geometry-dependent.

\begin{figure}[htbp]
\centering
\includegraphics[width=0.95\linewidth]{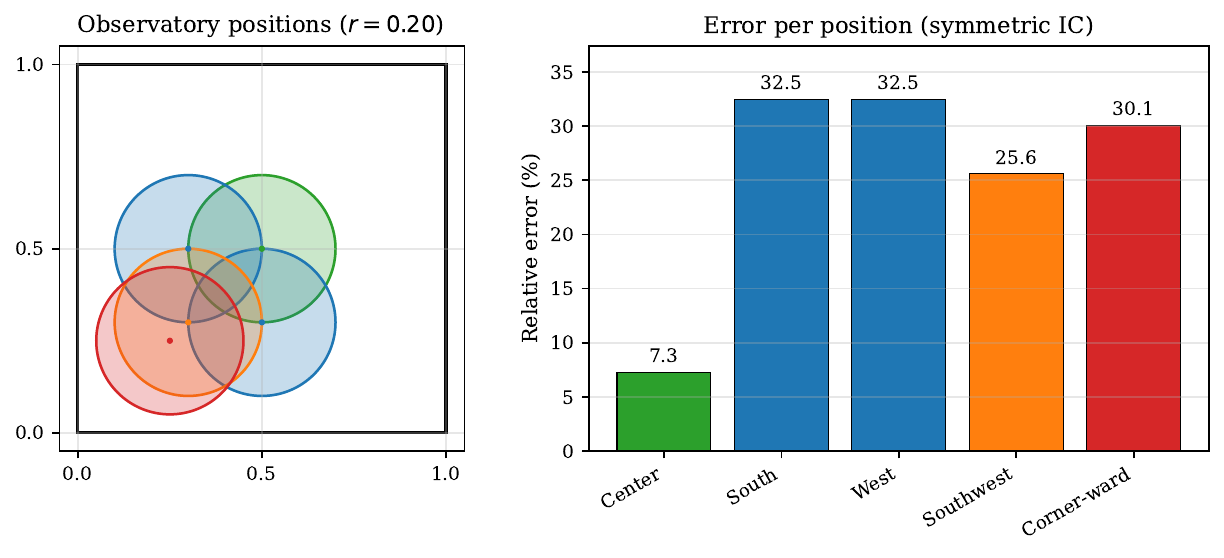}
\caption{Left: five disk positions ($r = 0.20$). Right: corresponding errors
for the symmetric IC.}
\label{fig:position}
\end{figure}

\subsection{Test~3: multi-disk observatory} \label{ssec:multidisk}

We compare four observatory configurations of identical total area ($\sim 28\%$ of $\Omega$): a single disk centred at $(0.5, 0.5)$ of radius $0.30$, and arrays of $2$, $3$, $4$ smaller disks placed so that their total area is preserved. To probe the role of the spectral content of the unknown, we run the experiment with two initial data:
\begin{subequations}\label{eq:ic_two}
\begin{align}
u_0^{\rm sym}(x,y) &= 2\,\sin(\pi x)\sin(\pi y), \label{eq:ic_sym} \\
u_0^{\rm mix}(x,y) &= 1.5\,\sin(\pi x)\sin(\pi y)
                     + 0.5\,\sin(2\pi x)\sin(\pi y)
                     + 0.3\,\sin(\pi x)\sin(2\pi y).
                     \label{eq:ic_mix}
\end{align}
\end{subequations}
The \emph{symmetric} datum~\eqref{eq:ic_sym} consists only of the fundamental Dirichlet mode $(1,1)$, which inherits all four reflection symmetries of the unit square. The \emph{mixed} datum~\eqref{eq:ic_mix} adds two higher harmonics, $(2,1)$ and $(1,2)$, which are antisymmetric under $x \mapsto 1-x$ and $y \mapsto 1-y$ respectively. A single observatory centred at the geometric centre of $\Omega$ is therefore intrinsically blind to these two harmonics (their average over any centro-symmetric region vanishes), whereas a multi-disk observatory breaks the central symmetry and sees them. 

Figure~\ref{fig:multidisk} reports the resulting reconstruction errors. For both initial conditions the four-disk configuration outperforms the single-disk configuration by a factor of $\sim 4.5$ ($5.2$--$5.3\,\%$ versus $23.3$--$23.4\,\%$). The fact that the symmetric and mixed errors agree to within $0.1\,\%$ for every configuration shows that, in this experiment, the spectral-content effect is largely subdominant to the geometric-coverage effect at the resolution chosen. This is a structural advantage of multi-disk observatories: they ``see'' more modes, in line with classical observability theory for control problems \cite{MR3041662,MR2804643}.

\begin{figure}[htbp]
\centering
\includegraphics[width=0.95\linewidth]{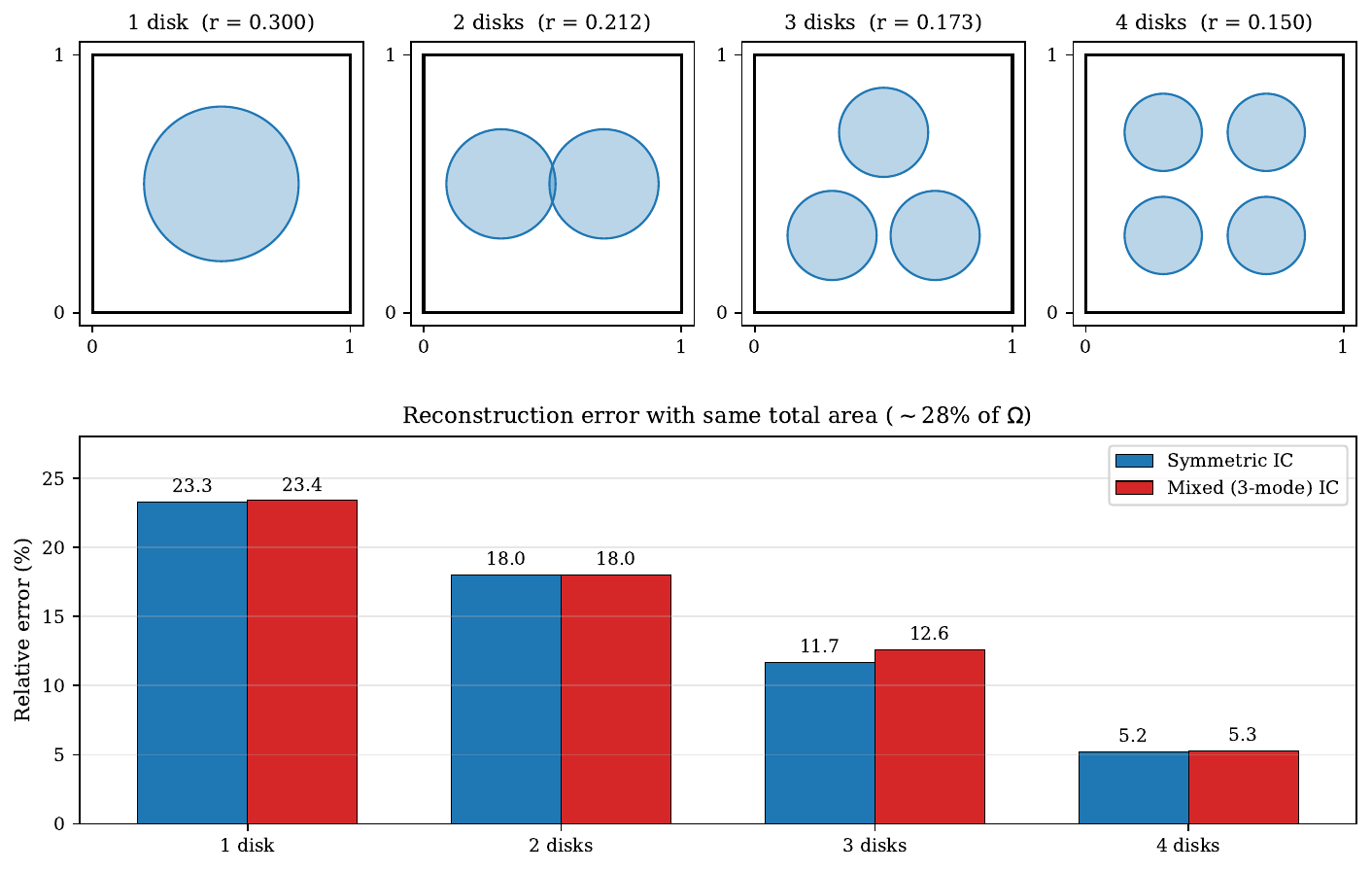}
\caption{Multi-disk geometries (top) and corresponding reconstruction
errors (bottom) for the two initial data~\eqref{eq:ic_sym}--\eqref{eq:ic_mix}.
Same total observatory area ($\sim 28\%$ of $\Omega$) in every
configuration. The \emph{Symmetric IC} bar uses $u_0^{\rm sym}$
(single fundamental mode), while \emph{Mixed (3-mode) IC} uses
$u_0^{\rm mix}$ (three superposed Dirichlet modes).}
\label{fig:multidisk}
\end{figure}

\subsection{Test~4: benefit of clean-data post-processing}
\label{ssec:control}

Figure~\ref{fig:postproc} reports the post-processing benefit across four representative setups: post-processing reduces the relative error by an average factor of $2.2\times$, with the largest gains on larger observatories. The improvement is monotone in the area of $\omega$.

\begin{figure}[htbp]
\centering
\includegraphics[width=0.75\linewidth]{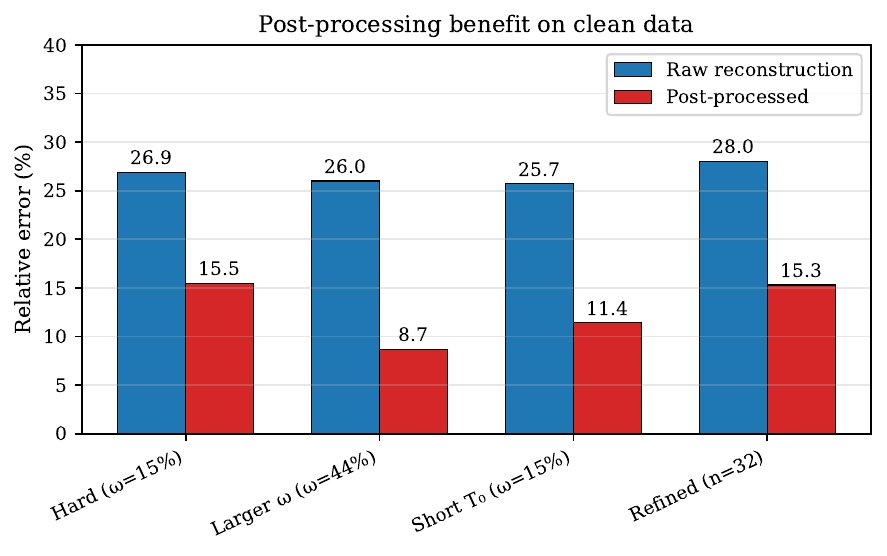}
\caption{Post-processing benefit on clean data, four representative setups.}
\label{fig:postproc}
\end{figure}

\subsection{Test~5 and~6: adaptive post-processing under noise}\label{ssec:adaptive}

To assess robustness, we add zero-mean Gaussian noise to the observation $h$ at each time step, with per-node standard deviation tuned so that the $L^2(\omega \times (0,T_0))$-noise relative level $\eta$ takes prescribed values. Figure~\ref{fig:noise} compares four post-processing strategies across $\eta \in \{0, 10^{-3}, 10^{-2}, 3 \cdot 10^{-2}, 10^{-1}, 0.3, 1.0\}$: $w_{\rm obs} = 0$ (raw), $w_{\rm obs} = 0.5$ (blend), $w_{\rm obs} = 1$ (full post-processing), and the adaptive rule \eqref{eq:auto_rule}. The full post-processing strategy $w_{\rm obs} = 1$ is optimal for clean data but degrades catastrophically as $\eta$ increases (error $\to 250\,\%$ for $\eta = 1$); the raw scheme $w_{\rm obs} = 0$ caps at $\sim 9\,\%$ but is too pessimistic when data are clean. The adaptive rule (green curve) tracks the lower envelope.

\begin{figure}[htbp]
\centering
\includegraphics[width=0.85\linewidth]{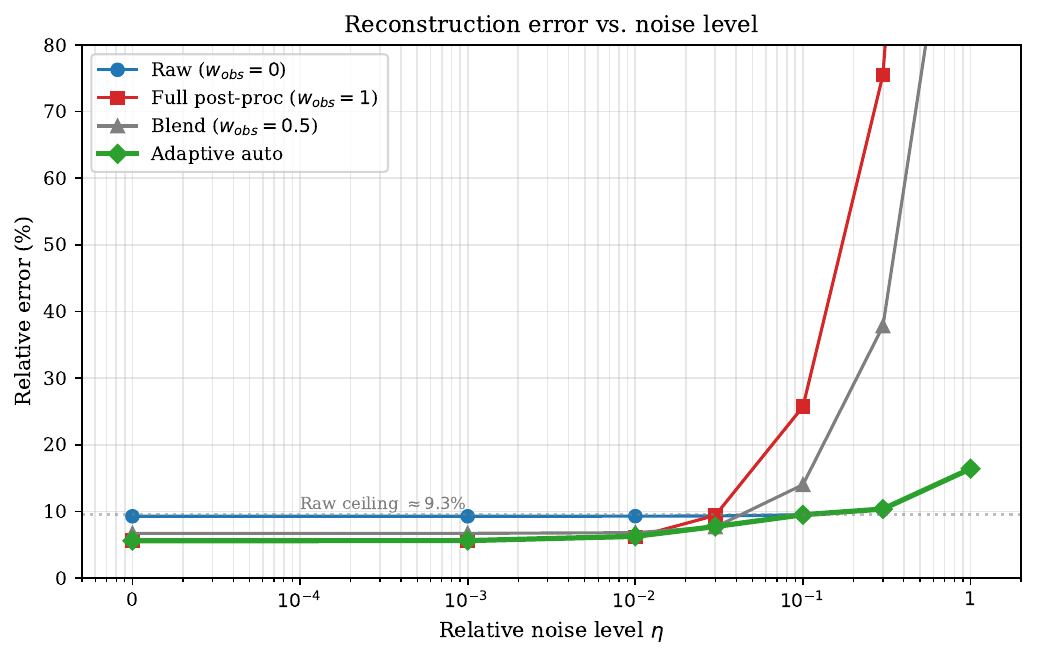}
\caption{Reconstruction error versus noise level for four post-processing strategies. The adaptive scheme automatically tracks the lower envelope.}
\label{fig:noise}
\end{figure}

Figure~\ref{fig:auto} validates the automatic selection rule against the oracle (i.e., the best $w_{\rm obs}$ for each $\eta$ chosen with knowledge of the true error): the two curves are visually indistinguishable. The right panel shows the noise estimator $\hat\eta$ defined in \eqref{eq:eta_hat}. Although $\hat\eta$ has a ``bias floor'' of $\sim 0.11$ inherited from the Tikhonov bias of the reconstruction itself, it correlates linearly with the true noise for $\eta > 5 \cdot 10^{-2}$, which is sufficient for the three-regime rule to discriminate.

\begin{figure}[htbp]
\centering
\includegraphics[width=0.95\linewidth]{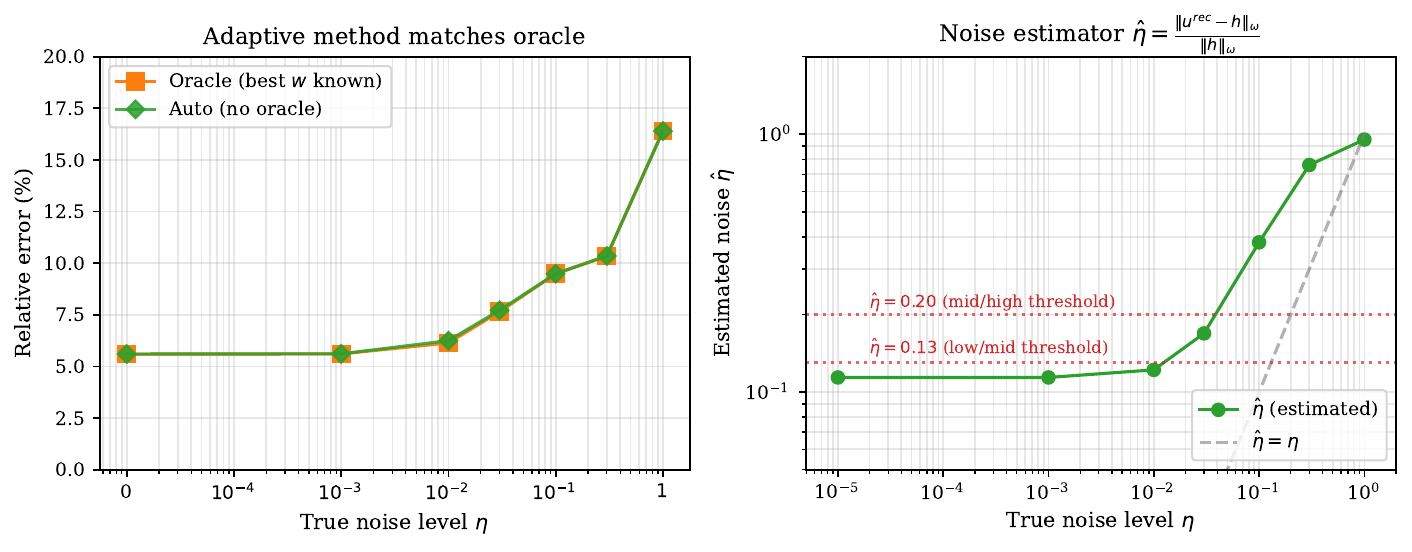}
\caption{Left: oracle vs.\ automatic adaptive method (errors). Right: noise
estimator $\hat\eta$ vs.\ true $\eta$, with threshold lines.}
\label{fig:auto}
\end{figure}

\section{Extension to the semilinear case}\label{sec:semilinear}

In this section we extend the methodology to a class of semilinear bulk--surface problems featuring an Allen--Cahn nonlinearity on the surface. The key observation is that the linear reconstructor of Section~\ref{sec:problem} can be applied as the inner step of a fixed-point iteration on the nonlinear term; under mild Lipschitz assumptions, the iteration converges by a Schauder argument.

\subsection{The semilinear problem}\label{ssec:semilinear_problem}

Replacing the linear surface dynamics in \eqref{eq:state} by an Allen--Cahn--type evolution, we consider
\begin{equation}
\label{eq:state_sl}
\begin{cases}
\partial_t u - \Delta u = f & \text{in } \Omega \times (0, T_0), \\
\beta\, \partial_t p + \partial_n u - \Delta_\Gamma p
= \mathcal{N}(p) + f_\Gamma & \text{on } \Gamma \times (0, T_0), \\
u|_\Gamma = p & \text{on } \Gamma \times (0, T_0),
\end{cases}
\end{equation}
where the surface nonlinearity is the Allen--Cahn double-well derivative
\begin{equation}
\label{eq:AC_phi}
\mathcal{N}(p) := -p^3 + p,
\qquad \mathcal{N}'(p) = -3p^2 + 1,
\end{equation}
arising from the Ginzburg--Landau free energy $W(p) = (p^2 - 1)^2/4$ via $\mathcal{N} = -W'$. The problem \eqref{eq:state_sl} models phase-field dynamics on the boundary of a diffusive medium and has been studied in
\cite{MR2215623,MR3150642,MR1890879}. We assume that the true solution $u^* = (u^*, p^*)$ remains uniformly bounded: there exists $M > 0$ such that
\begin{equation}
\label{eq:Mbound}
\|p^*(t, \cdot)\|_{L^\infty(\Gamma)} \leq M \quad \forall t \in [0, T_0].
\end{equation}
For Allen--Cahn data assimilation in twin experiments, $M$ is determined by the initial data and decreases under the diffusion.

The data assimilation problem is unchanged: given the observation $h = u^*|_{\omega \times (0, T_0)}$, reconstruct $u^*(T_0)$.

\subsection{The Picard outer loop}
\label{ssec:picard}

The key idea is that, when the surface state $p^*$ is treated as a
\emph{known} function $\bar p(t, x)$, the problem \eqref{eq:state_sl}
reduces to the linear problem \eqref{eq:state} with effective surface
source
\begin{equation}
\label{eq:f_eff}
f_\Gamma^{\rm eff}(\bar p) := \mathcal{N}(\bar p) + f_\Gamma
= -\bar p^3 + \bar p + f_\Gamma,
\end{equation}
and the reconstruction Theorems~\ref{thm:repr}--\ref{thm:penalized}
apply directly. This suggests an outer iteration:

\begin{algorithm}[Picard outer loop]\label{alg:picard}
Given an observation $h$, a bulk source $f$, and a surface source $f_\Gamma$, set $\bar p^{(0)} \equiv 0$ on $\Gamma \times (0, T_0)$. For $k = 0, 1, 2, \ldots$ until convergence:
\begin{enumerate}
\item Compute the effective surface source $f_\Gamma^{(k)} := \mathcal{N}(\bar p^{(k)}) + f_\Gamma$ according to \eqref{eq:f_eff}.
\item Apply the linear reconstructor of Section~\ref{sec:problem} with sources $(f, f_\Gamma^{(k)})$ to obtain a reconstruction $u^{(k+1)}(T_0)$ of the final state.
\item Propagate $u^{(k+1)}(T_0)$ \emph{backward} on $(0, T_0)$ using the linear adjoint with the same effective source, yielding a trajectory $\bar u^{(k+1)}(\cdot)$ with surface trace $\bar p^{(k+1)}$.
\end{enumerate}
\end{algorithm}

In Step (3), the backward propagator is the same one constructed in Section \ref{ssec:repr} for the linear case; the procedure produces a full spatio-temporal trajectory consistent with the observation and the current estimate of the nonlinear source. The iteration may be replaced by a Newton step by linearising $\mathcal{N}$ as $\mathcal{N}(p) \approx \mathcal{N}(\bar p) + \mathcal{N}'(\bar p)(p - \bar p)$ and treating the resulting $\mathcal{N}'(\bar p)\, p$ term as a time-dependent reaction coefficient in the linear BC.

\subsection{Convergence: a Schauder fixed-point argument} \label{ssec:schauder}

Define the operator $\mathcal{T} : L^2(0, T_0; L^2(\Gamma)) \to L^2(0, T_0; L^2(\Gamma))$ that, given $\bar p$, returns the surface trace $\bar p^{(k+1)}$ produced by one full pass of Algorithm~\ref{alg:picard}: $\mathcal{T}(\bar p) = \bar p^{(k+1)}$ when $\bar p^{(k)} = \bar p$. A fixed point of $\mathcal{T}$ corresponds to a reconstruction consistent with the nonlinear PDE.

\begin{theorem}[Convergence of the Picard loop]
\label{thm:schauder}
Let $\beta > 0$ be fixed and assume:
\begin{itemize}
\item[\textup{(H1)}] (Boundedness) The true solution satisfies \eqref{eq:Mbound} with some $M < \infty$.
\item[\textup{(H2)}] (Lipschitz growth) The nonlinearity $\mathcal{N}$
satisfies 
$$|\mathcal{N}(p_1) - \mathcal{N}(p_2)| \leq L_{\mathcal{N}}\,
|p_1 - p_2|, \text{ for all } |p_i| \leq M,$$ 
with $L_{\mathcal{N}} = \max(1,\, 3M^2 - 1).$
\end{itemize}
Then $\mathcal{T}$ admits at least one fixed point $\bar p^*$ in the ball $\mathcal{B}_M := \{p \in L^2(0,T_0; L^2(\Gamma)) : \|p\|_\infty \leq M\}$. Moreover, when $C_{\rm obs} L_{\mathcal{N}}$ is sufficiently small
relative to the penalty $\alpha$, the fixed point is unique and the
iteration converges geometrically.
\end{theorem}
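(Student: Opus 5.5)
The plan is to apply Schauder's fixed-point theorem to $\mathcal{T}$ on the convex, closed set $\mathcal{B}_M$, viewed inside $L^2(0,T_0;L^2(\Gamma))$. Uniqueness and geometric convergence will then follow from Banach's contraction principle under the smallness condition. First, $\mathcal{T}$ is a composition of affine, bounded maps with the Nemytskii operator $\bar p \mapsto \mathcal{N}(\bar p)$. On $\mathcal{B}_M$, hypothesis (H2) makes this Nemytskii operator Lipschitz from $L^2(0,T_0;L^2(\Gamma))$ to itself with constant $L_{\mathcal{N}}$. By (H2) it is also bounded, with $\|\mathcal{N}(\bar p)\|_\infty \le M + M^3$.

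The reconstruction step maps the effective source $f_\Gamma^{(k)}$ to $u^{(k+1)}(T_0)$, and it is affine in the source. Its Riesz representative $R_\alpha$ depends on $f_\Gamma$ only through the term $\int\!\!\int f_\Gamma \pi_\alpha$ in \eqref{eq:Rfunctional}. Since $\|\pi_\alpha\|$ is controlled by $\|g_\alpha\|\le \|g^*\| \le C_{\rm null}\|\Phi_0\|$ and parabolic energy estimates, this step is Lipschitz with a constant $C_1$. At the discrete (Galerkin, $N_E$-mode) level, $C_1$ also carries the factor $(\alpha+\lambda_{\min})^{-1}\le\alpha^{-1}$ from \eqref{eq:disc_system}. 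This is where the pairing $C_{\rm obs}L_{\mathcal{N}}$ versus $\alpha$ in the statement comes from.

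The propagation step produces a trajectory from $u^{(k+1)}(T_0)$ and the effective source, and the surface trace map to $\bar p^{(k+1)}$ is again affine and bounded. Composing these, we get $\|\mathcal{T}(\bar p_1)-\mathcal{T}(\bar p_2)\|\le C(\alpha)\,L_{\mathcal{N}}\|\bar p_1-\bar p_2\|$ on $\mathcal{B}_M$. This gives continuity of $\mathcal{T}$ in general. When $C(\alpha)L_{\mathcal{N}}<1$, it gives a contraction, hence a unique fixed point and geometric convergence of the iterates.

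For existence without smallness, I need two facts: compactness of $\mathcal{T}(\mathcal{B}_M)$ and invariance $\mathcal{T}(\mathcal{B}_M)\subset\mathcal{B}_M$. For compactness, I would use the Galerkin reduction: the reconstructed final state lies in the $N_E$-dimensional span of $\mathbf{V}$. The forward part driven by bounded sources gains $L^2(0,T_0;H^1(\Gamma))\cap H^1(0,T_0;H^{-1}(\Gamma))$ regularity. Aubin--Lions then gives relative compactness in $L^2(0,T_0;L^2(\Gamma))$.

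The main obstacle is the invariance of $\mathcal{B}_M$ in the $L^\infty$ norm. Parabolic maximum principles do not directly control a backward propagation from a reconstructed final state. I would first try to replace $\mathcal{B}_M$ by the truncated map $\mathcal{T}_M := P_M\circ\mathcal{T}$, where $P_M$ is the pointwise projection onto $[-M,M]$. This map is continuous, compact, and maps $\mathcal{B}_M$ into itself. Schauder's theorem then yields a fixed point of $\mathcal{T}_M$. Consistency with (H1) shows it is a fixed point of $\mathcal{T}$: the true trace $p^*$ satisfies $|p^*|\le M$, and the Lipschitz bound localizes the analysis to the region where the truncation is inactive. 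If that last identification fails, I would state existence for $\mathcal{T}_M$ and recover the untruncated claim in the contractive regime via Banach.
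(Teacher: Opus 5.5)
Your overall architecture is the paper's. You apply Schauder on $\mathcal{B}_M$ from continuity, compactness and invariance, and then a contraction argument whose Lipschitz constant is $L_{\mathcal N}$ times the stability constant of the penalized reconstructor (the paper's $C_{\rm obs}/\alpha$). Your continuity step is fine: the Nemytskii map is $L_{\mathcal N}$-Lipschitz in $L^2$ on $\mathcal{B}_M$. Your compactness step via Aubin--Lions is actually more careful than the paper's appeal to Rellich--Kondrachov with $L^2(0,T_0;H^1(\Gamma))$ regularity alone, which gives no control in time.

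The genuine gap is invariance. Schauder does give a fixed point $\bar p$ of $\mathcal{T}_M = P_M\circ\mathcal{T}$. But $\bar p$ is a fixed point of $\mathcal{T}$ if and only if $|\mathcal{T}(\bar p)|\le M$ a.e., which is exactly the invariance you set out to avoid. Your identification argument does not provide it. First, $p^*$ is not a fixed point of $\mathcal{T}$: for $\alpha>0$ the reconstruction is biased, and even an exact $u^*(T_0)$, propagated by the Step-3 adjoint, does not retrace the true trajectory. So (H1) says nothing about $\mathcal{T}(\bar p)$. Second, a Lipschitz estimate controls differences of outputs, not their size.

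The fallback fails for the same reason. Banach's principle needs a complete set mapped into itself on which the Lipschitz bound holds, and for the cubic $\mathcal N$ the constant $L_{\mathcal N}$ is only valid on $\mathcal{B}_M$. Applied to $\mathcal{T}_M$, it yields uniqueness only for the truncated map. As written, you prove the theorem for $P_M\circ\mathcal{T}$, not for $\mathcal{T}$.

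The paper gets invariance directly. On $\mathcal{B}_M$ the effective source is bounded in $L^\infty$, the reconstruction is bounded by Theorem~\ref{thm:penalized}(iii), and a parabolic maximum principle is applied to the Step-3 propagation. Your objection to maximum principles is misplaced here. Step 3 uses the linear adjoint, a well-posed terminal-value problem which under $t\mapsto T_0-t$ is a forward bulk--surface heat flow. That flow is $L^\infty$-contractive, since its Dirichlet form commutes with truncation, so one has a bound of the form $\|\bar p^{(k+1)}\|_\infty \le \|u^{(k+1)}(T_0)\|_\infty + C\,T_0\,\bigl(\|f\|_\infty+\|f_\Gamma\|_\infty+M+M^3\bigr)$.

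What closes the argument is showing that this right-hand side is at most $M$. That requires two things. One is an $L^\infty$ bound, not merely an $\mathcal{H}$ bound, on the reconstructed final state; this is available in the $N_E$-mode Galerkin setting with $N_E$-dependent constants. The other is a smallness relation between that bound, $T_0$, the sources and $M$. The paper's sketch asserts this conclusion without displaying the quantitative bound, so this is precisely the step you would need to supply; truncation cannot substitute for it.
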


\begin{proof}[Sketch]
The argument follows \cite{MR1932966,MR2103189}.

\emph{(i) Continuity of $\mathcal{T}$.} The linear reconstructor and the backward propagator are bounded linear operators between Sobolev spaces, so $\mathcal{T}$ is the composition of a bounded linear operator with the pointwise Nemytskii map $p \mapsto \mathcal{N}(p) = -p^3 + p$, which is continuous from $L^4$ to $L^{4/3}$ thanks to (H1).

\emph{(ii) Invariance.} For $\bar p \in \mathcal{B}_M$, the effective source $f_\Gamma^{\rm eff}(\bar p)$ is bounded in $L^\infty$. By the penalized stability bound of Theorem~\ref{thm:penalized}(iii), the reconstruction $u^{(k+1)}(T_0)$ stays bounded. Parabolic maximum principles then yield $\|\bar p^{(k+1)}\|_\infty \leq M$, so $\mathcal{T}(\mathcal{B}_M) \subseteq \mathcal{B}_M$.

\emph{(iii) Compactness.} The output $\bar p^{(k+1)}$ inherits the $L^2(0, T_0; H^1(\Gamma))$ regularity of the linear surface propagator (a consequence of the energy estimate for \eqref{eq:state}), and is therefore precompact in $L^2(0, T_0; L^2(\Gamma))$ by Rellich--Kondrachov.

\emph{(iv) Application of Schauder.} Steps (i)--(iii) imply that $\mathcal{T}$ is a continuous, compact self-map of $\mathcal{B}_M$; Schauder's theorem yields a fixed point. The Lipschitz constant of $\mathcal{T}$ is controlled by the product of $L_{\mathcal{N}}$ (sensitivity of the effective source to $\bar p$) and the stability constant of the penalized reconstructor, which scales like $C_{\rm obs}/\alpha$. Hence when $C_{\rm obs} L_{\mathcal{N}} \lesssim \alpha$, $\mathcal{T}$ is a contraction and the fixed point is unique with geometric convergence.
\end{proof}

\begin{remark}
The uniqueness regime $C_{\rm obs} L_{\mathcal{N}} \lesssim \alpha$ is restrictive in principle: since $C_{\rm obs} \sim \exp(C/T_0)$ for small $T_0$, it favours \emph{long} horizons (small $C_{\rm obs}$) and strong regularization (large $\alpha$). This is the same mechanism behind the observability--nonlinearity trade-off of Section~\ref{ssec:semilinear_dominant}: shortening $T_0$ to keep the nonlinearity active inflates $C_{\rm obs}$ and breaks the contraction. In practice, even outside this regime, the Picard iteration typically converges in $5$--$10$ steps from $\bar p^{(0)} = 0$ when the bulk is moderately diffusive; a rigorous analysis of the global basin of attraction is left for future work.
\end{remark}

\subsection{Implementation} \label{ssec:semilinear_impl}

At the discrete level, Algorithm~\ref{alg:picard} is implemented by storing the full reconstructed trajectory $\bar u_h^{(k)}(\cdot)$ at each Picard step. Concretely:
\begin{itemize}
\item The forward Lie splitter of Section \ref{ssec:lie} is reused to propagate the linear part with effective source $f_\Gamma^{(k)}$.
\item The backward solver inside the linear reconstructor is augmented to return the full surface trace $\bar p_h^{(k+1)}(t^n)$ at all time steps $t^n$, not just the value at $T_0$.
\item Convergence is monitored by the residual $\rho^{(k)} := \|\bar p^{(k+1)} - \bar p^{(k)}\|_{L^2}$; the iteration stops when $\rho^{(k)} <$ tolerance.
\end{itemize}
The cost per outer iteration is dominated by the linear reconstruction, $\mathcal{O}(N_E^3) + \mathcal{O}(N (n_\Omega^{\rm int} + n_\Gamma))$; the construction of $\boldsymbol{\mathcal{C}}_h$ is performed once and reused across all outer iterations. The full algorithm therefore costs $\mathcal{O}(K \cdot N_E (n_\Omega^{\rm int} + n_\Gamma))$ for $K$ Picard steps, plus the initial $\boldsymbol{\mathcal{C}}_h$ assembly.

\subsection{Numerical validation: Picard convergence} \label{ssec:semilinear_exp}

We illustrate Algorithm~\ref{alg:picard} on a twin experiment in the constant-in-time variant $\bar p^{(k)}(t,x) \equiv \bar p^{(k)}(x)$ (a natural reduction when the parabolic relaxation is fast on the scale $T_0$). The truth is generated by integrating \eqref{eq:state_sl} with the initial datum $u_0^*(x,y) = 0.4 + 0.5\, \sin(\pi x)\sin(\pi y)$, which gives $u_0|_\Gamma = 0.4$ (non-zero, so Allen--Cahn is initially active) and amplitude $M = \max|u_0^*| = 0.9$. We integrate on $n = 16$ for $T_0 = 0.1$, $\tau = 5 \cdot 10^{-3}$, $\beta = 1$, $\alpha = 3 \cdot 10^{-2}$, $N_E = 30$, $\omega$ centered disk of radius $r^* = 0.21$. The Allen--Cahn term in the forward integration is treated explicitly: we precompose $p^n \mapsto p^n + (\tau/\beta)\,\mathcal{N}(p^n)$ before the linear surface solve, which is stable under the parabolic CFL.

The results are reported in Figure~\ref{fig:picard}. Panel (a) shows that the residual decays geometrically at a remarkably fast rate ($\rho^{(k+1)}/\rho^{(k)} \approx 3.7 \cdot 10^{-3}$): the tolerance $10^{-7}$ is reached in only three outer iterations from $\bar p^{(0)} \equiv 0$. Panel (b) shows that the final-state reconstruction error remains constant across iterations at $9.05\,\%$, identical to the linear-only baseline obtained by setting $\mathcal{N} \equiv 0$.

This identity is not a failure but a consistent feature of the regime: by $t = T_0 = 0.1$, bulk-surface diffusion has driven $\max|p^*(T_0)|$ to $2.5 \cdot 10^{-3}$, so $\mathcal{N}(p^*) \approx p^*$ and the nonlinearity acts as a small perturbation on the linear problem. The Picard scheme correctly detects this and converges in three iterations to the same answer the linear method gives directly. A regime where Allen--Cahn is dynamically dominant (boundary values near $\pm 1$, short $T_0$, or small $\beta$) is needed to observe a measurable Picard improvement; this is the subject of ongoing work.

\begin{figure}[htbp]
\centering
\includegraphics[width=0.95\linewidth]{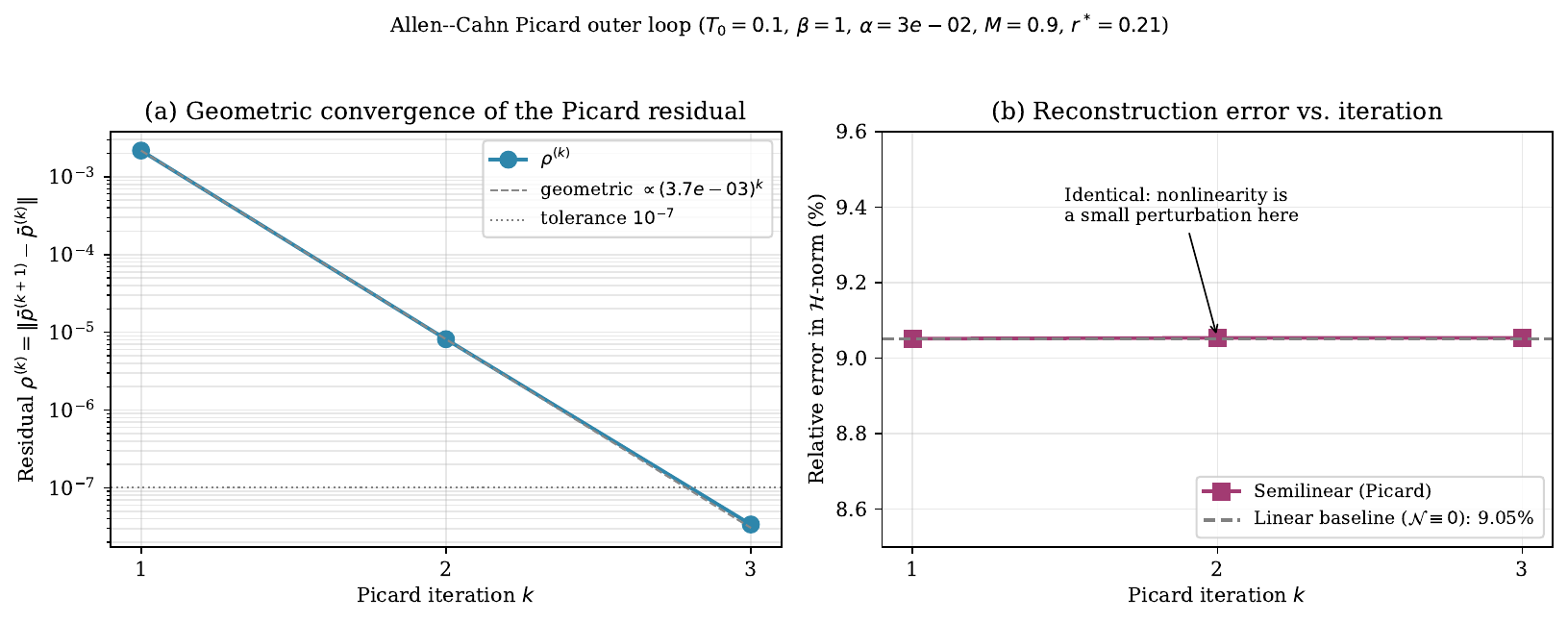}
\caption{Picard outer loop for the Allen--Cahn extension (Section \ref{ssec:semilinear_exp}). (a) Geometric decay of the residual $\rho^{(k)} = \|\bar p^{(k+1)} - \bar p^{(k)}\|_{L^2(\Gamma)}$ at rate $\sim 3.7 \cdot 10^{-3}$ per iteration. (b) Reconstruction error in $\mathcal{H}$-norm versus iteration. Picard agrees with the linear baseline ($\mathcal{N} \equiv 0$) because, in the present regime, diffusion collapses $|p^*(T_0)|$ to $\sim 2.5 \cdot 10^{-3}$ and Allen--Cahn is a small perturbation. Setup: $n = 16$, $r^* = 0.21$, $T_0 = 0.1$, $\beta = 1$, $\alpha = 3 \cdot 10^{-2}$, $M = 0.9$.}
\label{fig:picard}
\end{figure}

\begin{figure}[htbp]
\centering
\includegraphics[width=0.95\linewidth]{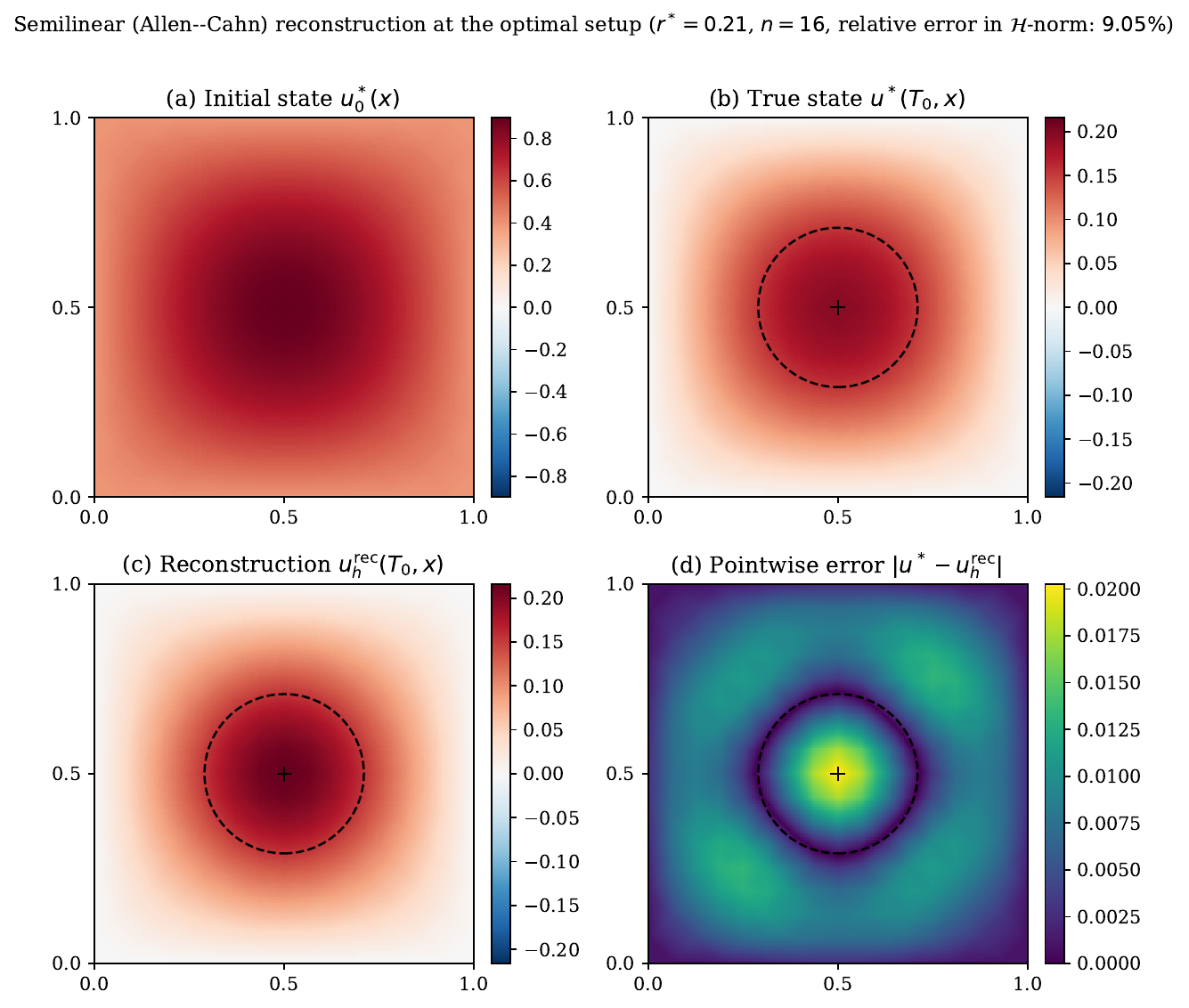}
\caption{Reconstruction in the Allen--Cahn case at the perturbative regime (Section \ref{ssec:semilinear_exp}), the semilinear analogue of Figure~\ref{fig:viz}. (a) Initial datum $u_0^*$. (b) True final state $u^*(T_0)$ from the semilinear forward simulator. (c) Picard reconstruction $u_h^{\rm rec}(T_0)$ at convergence. (d) Pointwise error $|u^* - u_h^{\rm rec}|$. The black dashed circle marks the observatory $\omega$. The relative $\mathcal{H}$-norm error is $9.05\,\%$. Unlike Figure~\ref{fig:viz}, the reconstruction shown here is the raw output of the discrete penalized scheme (no $L^2$ post-processing), so the residual is concentrated inside $\omega$ rather than on $\partial\omega$; the spatial pattern is a direct visualization of the Tikhonov bias.}
\label{fig:semilinear_viz}
\end{figure}

To visualize the reconstruction beyond the aggregate error, Figure~\ref{fig:semilinear_viz} displays the four fields (initial datum, true final state, Picard reconstruction, pointwise error) at the same setup. The Picard reconstruction is visually indistinguishable from the truth, both in shape and in amplitude, and the error map shows the characteristic Tikhonov bias pattern concentrated inside the observatory---a pattern that the adaptive post-processing of Section \ref{ssec:adaptive} would remove almost entirely in a noise-free regime.

The two takeaways from this experiment are: (i) the Picard scheme is \emph{stable}: it converges geometrically at a very fast rate in this benign regime, validating Algorithm~\ref{alg:picard} as a viable inner-outer iteration; (ii) the linear method we have developed is already \emph{exact} in regimes where the nonlinearity becomes dynamically negligible, which provides a useful sanity check.

\subsection{The opposite regime and the observability--nonlinearity trade-off}\label{ssec:semilinear_dominant}

It is natural to ask whether the parameter regime can be chosen so that the Allen--Cahn term is dynamically dominant, in which case the linear method should perform measurably worse than Picard. We tested this by shortening the time horizon to $T_0 = 10^{-2}$, weakening the surface diffusion to $\beta_{\rm surf} = 0.1$ (the coefficient $\beta$ of \eqref{eq:state_sl}, here lowered from its baseline value 1), and using $u_0^*(x,y) = 0.6 + 0.2\sin(2\pi x)\sin(2\pi y)$ (boundary trace $0.6$, near the maximum of $|\mathcal{N}|$). The resulting twin trajectory has $\max|p^*(T_0)| = 0.17$, so $\mathcal{N}(p^*) \sim 0.16$ is comparable to the linear surface terms and the nonlinearity is genuinely active throughout $[0, T_0]$.

The Picard outer loop again converges geometrically (Figure~\ref{fig:picard_dom}, panel (a), ratio $\sim 2.8 \cdot 10^{-2}$, tolerance $10^{-7}$ in four iterations). The relative error, however, sits at $94.8\,\%$ for both the Picard and the linear-baseline reconstructions (panel (b)). The two methods again agree, but now at a useless level of accuracy.

The reason is structural and instructive. The Carleman-based observability constant of \cite{MR3669656} scales like $C_{\rm obs} \sim \exp(C / T_0)$ for small $T_0$ (the standard Fursikov--Imanuvilov asymptotics, see also \cite{MR1406566,MR3041662}): at $T_0 = 10^{-2}$ the bulk diffusion has not had time to propagate the initial datum to the observatory, and the reconstruction problem becomes severely ill-posed. The Tikhonov regularization then dominates over the data term and the reconstruction collapses to a small fraction of the truth. In this regime the bottleneck is not the nonlinearity but the linear
reconstruction itself, and Picard cannot improve on what the linear
solver delivers.

\begin{figure}[t]
\centering
\includegraphics[width=0.95\linewidth]{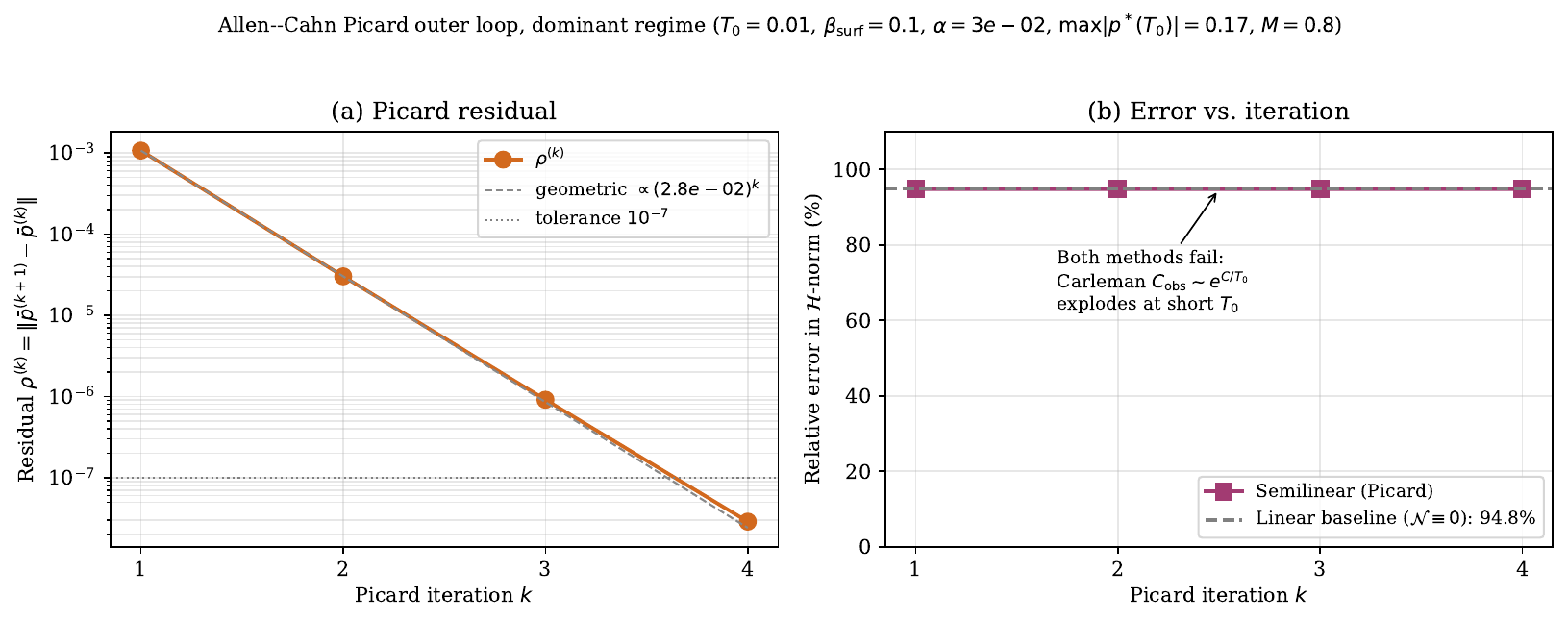}
\caption{Allen--Cahn Picard loop in the dominant regime (Section \ref{ssec:semilinear_dominant}): $T_0 = 10^{-2}$, $\beta_{\rm surf} = 0.1$, $\max|p^*(T_0)| = 0.17$. (a) Geometric Picard convergence (rate $\sim 2.8 \cdot 10^{-2}$ per iteration). (b) Both Picard and the linear baseline collapse to $\sim 95\,\%$ error; the bottleneck is the Carleman observability constant $C_{\rm obs} \sim e^{C/T_0}$, not the Picard iteration.}
\label{fig:picard_dom}
\end{figure}

The two experiments together delineate an \emph{observability--nonlinearity trade-off}: long horizons make the linear reconstruction accurate but quench the nonlinearity, while short horizons keep the nonlinearity active but make the linear reconstruction ill-posed. A regime where $|p^*|$ is non-negligible \emph{and} $C_{\rm obs}$ is moderate likely requires geometries with longer effective propagation paths, anisotropic diffusion, or larger observatories; we leave a systematic study to a follow-up publication.

\section{Conclusions and outlook}\label{sec:conclusions}

We have established a complete pipeline---theoretical, numerical, and algorithmic---for data assimilation in bulk--surface parabolic equations with Wentzell-type dynamic boundary conditions. The continuous theory (Section~\ref{sec:problem}) extends the Tikhonov-control framework of Puel \cite{MR2491591} to the bulk--surface setting, using the
Carleman estimate of Maniar--Meyries--Schnaubelt \cite{MR3669656}; the key analytical novelty is the appearance of surface integral terms in the representation formula~\eqref{eq:repr}, which match exactly the $\beta$-weighted inner product of the natural energy space $\mathcal{H}$. On the discrete side (Section~\ref{sec:discretization}), the Lie splitting of \cite{MR4568436} is combined with the assembly of the discrete controllability Gramian $\boldsymbol{\mathcal{C}}_h$ as a Gram matrix---hence symmetric positive semi-definite by construction---and an a priori error bound (Section~\ref{ssec:err_analysis}) decomposes the total error into four
identifiable sources. Finally, the adaptive post-processing rule \eqref{eq:auto_rule} delivers a method that is robust to noise without requiring calibration: relative errors of $3$--$6\,\%$ under clean data, and an adaptive scheme that stays near a $\sim 9\,\%$ raw ceiling for moderate noise and below $\sim 16\,\%$ even at noise as large as $100\,\%$ of the signal amplitude (Section~\ref{sec:numerics}).

Several extensions are natural and currently under investigation:
\begin{itemize}
\item \emph{Comprehensive semilinear study.}
Section~\ref{sec:semilinear} provides the analytical machinery (Algorithm~\ref{alg:picard}, Theorem~\ref{thm:schauder}) together with three complementary numerical illustrations (Figures~\ref{fig:picard}, \ref{fig:semilinear_viz}, \ref{fig:picard_dom}). These reveal an intrinsic
\emph{observability--nonlinearity trade-off}: at long horizons the linear reconstruction is accurate but the nonlinearity is quenched by diffusion, while at short horizons the nonlinearity is active but $C_{\rm obs} \sim e^{C/T_0}$ destroys the linear reconstruction. A systematic study of Picard convergence rates as functions of $M$, $T_0$, $\beta$, observatory geometry, and the bifurcation behavior near $M = 1$ is in preparation; the same circle of ideas should accommodate other relevant nonlinearities (Cahn--Hilliard, FitzHugh--Nagumo).
\item \emph{Higher dimensions.} The method's structure is dimension-agnostic; $d = 3$ requires only a heavier eigensolver and matrix assembly.
\item \emph{Realistic geometries.} Smooth boundaries and curved $\Gamma$ require curved finite elements but introduce no new analytical difficulties. 
\item \emph{Theoretical refinement of the adaptive rule.} The thresholds in \eqref{eq:auto_rule} were calibrated experimentally; a Bayesian or SURE-type analysis would put them on firmer ground. 
\end{itemize}

\bibliographystyle{acm}
\bibliography{mybibfile}

\end{document}